\newtheorem{thm}{Theorem}[section] \newtheorem{cor}[thm]{Corollary}
\DeclareMathAlphabet{\mathpzc}{OT1}{pzc}{m}{it}
 \DeclareMathOperator{\Hom}{Hom}
\DeclareMathOperator*{\End}{End}
\DeclareMathOperator*{\into}{\hookrightarrow}
 \newcommand{\QQ}{\mathbb{Q}}
 \newcommand{\PP}{\mathbb{P}}
\newcommand{\onto}{\twoheadrightarrow}
 \newcommand {\C} {{\mathbb C}}
 \newcommand {\Z} {{\mathbb Z}}
\newcommand {\Q} {{\mathbb Q}} 
    \newcommand {\dt} {{\bullet}} \newcommand {\G}
{{\mathbb G}}  \newcommand {\OO}
{{\mathcal O}} \newcommand {\A} {\mathbb{A}} \newcommand {\I}
{\mathcal{I}}
\newtheorem{lemma}[thm]{Lemma}
\newtheorem{prop}[thm]{Proposition}
\begin{document}
\title{ Beilinson-Tate cycles on semiabelian varieties} \author{ Donu
  Arapura } \thanks{First author partially supported by NSF}
\address{Department of Mathematics\\
  Purdue University\\
  West Lafayette, IN 47907\\
  U.S.A.}  \author{ Manish Kumar }
\address{Department of Mathematics\\
  Michigan State University\\
  East Lansing, MI-48824\\
  U.S.A.}
\maketitle

In \cite{AK}, it was shown that the Beilinson-Hodge conjecture holds
for a product of smooth curves and semiabelian varieties. In this
companion paper, we establish the ``Tate'' version that given such a
variety over (say) a number field, the Galois invariant cycles of
highest weight on \'etale cohomology come from motivic cohomology.
We also give criteria for similar statements to hold for lower weight cycles 
in both the Hodge and Tate cases.
More precisely, given a smooth not necessarily proper variety $U$ over
a field $k$, we have the so called regulator or cycle maps
\begin{itemize}
\item $CH^i(U,j)\otimes \Q\to Hom_{MHS}(\Q(0), H^{2i-j}(U,\Q(i)))$
  ($k=\C$, Hodge-version)
\item $CH^i(U,j)\otimes \Q_l\to H_{et}^{2i-j}(U\times_k
  \bar{k},\Q_l(i))^{G_k}$ ($k$ finitely generated over $\Q$,
  Tate-version)
\end{itemize}
from Bloch's higher Chow groups \cite{bloch}. Here $G_k$ is the
absolute Galois group $Gal(\bar k/k)$. Following Asakura and Saito
\cite{as}, we refer to the surjectivity of the first map when $i=j$ as
the Beilinson-Hodge conjecture, and surjectivity of the second when
$i=j$ as Beilinson-Tate.  One may consider, as Beilinson originally
had, the surjectivity question in either case for all pairs $(i,j)$,
but then things becomes a bit more subtle.  Surjectivity is known to
fail for some pairs $i\not=j$ when the field of definition is
transcendental, but surjectivity is expected over number fields,
c.f. \cite{jannsen}.  We will refer to this form as the ``strong''
Beilinson's Hodge and Tate conjecture. 

% Recall that a group variety $A$ over a field $k$ is called a semiabelian 
% variety if $A$ is an extension of an abelian variety $A_0$ by a torus $T$. 
% More precisely, there is an exact sequence
% $$\xymatrix{ 1\ar[r] &  T  \ar[r]^{\phi} & A \ar[r]^{\psi}& A_0\ar[r] & 0}$$
% where $\phi$ and $\psi$ are morphisms of algebraic group varieties and
% $T(\bar{k})=\G_m^n(\bar{k})$ for some $n$.
 
Our goal here is to extend the results of \cite{AK} to include strong 
version of Beilinson's Hodge conjecture. In proposition \ref{strong} and \ref{main2}
it is shown that the strong version of Beilinson Hodge conjecture holds for product of 
curves and semiabelian varieties if the classical Hodge conjecture for their 
smooth compactification is true.
The conjecture is shown to hold even for 
varieties dominated by product of curves (corollary \ref{maincor}).

We also show the Beilinson-Tate conjecture for products of curves,
semiabelian varieties and more generally for varieties dominated by
products of curves.  We recall that a semiabelian variety
over a field is an extension of abelian variety by a (possibly nonsplit) torus.
Also the strong version of Beilinson-Tate conjecture
holds in these cases if the Tate conjecture holds for their smooth 
compactification. The proof of the main result in \cite{AK} was based on 
analysing invariants under the Mumford-Tate group. A similar method is used
for Beilinson-Tate conjectures, but the Galois group plays the role of the
Mumford-Tate group in this case.

\section{Cycle map on higher Chow groups}\label{cycle-map}

The regulator map was originally described using Chern classes in
higher $K$-theory \cite{beilinson1,beilinson,gillet,soule}. Bloch
\cite{bloch1, bloch} later recast this as a more explicit cycle map on
his higher Chow group. This group is a fundamental object. It can be
interpreted as motivic cohomology \cite{levine, mvw}, and rationally
it coincides with the associated graded pieces of $K$-theory with
respect to the $\gamma$-filtration.  From the point of view of
motives, the regulator can be understood as coming from an appropriate
realization functor \cite{huber}.

We felt it useful to describe Bloch's construction in some detail, in
order to make certain properties clear. Let $U$ be a smooth variety
defined over a field $k$.  In Bloch's original definition, he viewed
$\A_k^n$ as a simplex and defined $CH^q(U,n)$ as the homology of the
complex $z^q(U,n)\subset Z^q(U\times \A^n)$ of codimension $q$ cycles
meeting the simplicial faces properly.  Here it is more convenient to
use the cubical approach \cite{totaro}, where we view $\A^n_k$ as a
cube with faces obtained by setting some of the coordinates to $0$ or
$1$. The codimension one faces can be described as images of the face
maps $\partial_i^{\epsilon}:\A^{n-1}\to \A^n$, $\epsilon\in\{0,1\}$
given by
$$(t_1,\ldots t_{n-1})\mapsto (t_1,\ldots t_{i-1},\epsilon,t_i,\ldots)$$
Let $C^q(U,n)\subset Z^q(U\times \A^n)$ denote the group generated by
codimension $q$ cycles meeting faces properly.  Then we have a complex
$$\ldots \to C^q(U,n)\to C^q(U,n-1)\to\ldots$$
with differential
\begin{equation}
  \label{eq:cubicaldiff}
  \partial = \sum_i (-1)^i (\partial_i^{0*}-\partial_i^{1*})  
\end{equation}
So as to use cohomology indexing, put $ C^q(U,n)$ in degree $-n$.  In
order to get the right cohomology, we have to divide out by the
subcomplex of degenerate cycles $D^q(U,n)$ spanned by pullbacks of
cycles from $U\times \A^{n-1}$ along coordinate projections $p_i$. Note
that the condition is vacuous for $n=0$, so $D^q(U,0)=0$.  Put $\bar
C^q(U,n) = C^q(U,n)/D^q(U,n)$.  Then $CH^q(U,n) = H^{-n}(\bar
C^q(U,\dt))$.

Suppose now that the ground field $k=\C$.  Let $S^\dt(U)$ be the
singular cochain complex.  Given Zariski closed $Z\subset U$, let
$S^\dt_Z(U) = \ker[S^\dt(U)\to S^\dt(U-Z)]$. This computes the
relative cohomology $H_Z^*(U) := H^*(U,U-Z) = H^*(S_Z(U))$. For a
cycle $Z$, we take $S_Z=S_{supp\, Z}$.  We define a double complex
$$S^{ab}=S^{ab}(U)=\varinjlim_{Z\in C^q(U,-a)} S_Z^b(U\times \A^{-a})$$
in the second quadrant with vertical differential given by simplicial
coboundary and horizontal differential $\partial $ given by
(\ref{eq:cubicaldiff}). To avoid convergence issues, we should
truncate this below a certain $a\ll 0$ as in \cite{bloch1}. However,
to avoid excessive notation we leave this step implicit. We obtain a
spectral sequence
$$E_1^{-a,b}(U)= H^b(\varinjlim S^\dt_Z(U\times \A^{a})) \cong 
\varinjlim H_Z^b(U\times \A^{a})\Rightarrow H^{b-a}(Tot(S^{\dt\dt}))$$
As above, we have a subcomplex of degenerate cohomology
$${}_DE_1^{-a,b}(U) = \sum p_i^*H_Z^b(U\times \A^{a-1})$$
and we set $\bar E_1^{ab} = E_1^{ab}/{}_DE_1^{ab}$ to the quotient
spectral sequence. It's not quite clear what $\bar E_1^{ab} $
converges to, but we don't really care. If we drop the support
condition by setting
$${}'S^{ab}=S^b(U\times \A^{-a})$$
and define ${}'E_1^{ab}, {}_D'E_1^{ab}, {}'\bar E_1^{ab}$ exactly as
before using ${}'S^{ab}$, then
$${}'\bar E_1^{ab} =
\begin{cases}
  H^b(U) &\text{if $a=0$}\\
  0 & otherwise
\end{cases}
$$
Since we have a map of spectral sequences $\bar E_1^{ab}\to {}'\bar
E_1^{ab}$, we conclude the abutment of the first spectral sequence
$E^{n}$ maps to the abutment of the second which is just $H^{n}(U)$.

% We have ${}_SE_1^{ab} \cong H^b(U)$ and $d:{}_SE_1^{ab}\to
% {}_SE_1^{a+1,b}$ is given by
% $$d=
% \begin{cases}
%   id& \text{$a$ even}\\
%   0 &\text{otherwise}
% \end{cases}
% $$
% Thus the spectral sequences collapses to
% $$H^{i}(Tot(S^{\dt\dt})) \cong H^i(U)$$
% In order to define a cycle map, it is useful to refine this by
% imposing a support condition.  Define
% $$T^{ab}= \varinjlim_{Z\in z^q(U,-a)} S_Z^b(U\times \Delta_{-a})$$
% This maps to $S^{\dt\dt}$.  We have a spectral sequence
% $${}_TE_1^{ab}= H^b(S_Z^\dt(U\times \Delta_{-a})) =  \varinjlim_Z H_Z^b(U\times \Delta_{-a})
% \Rightarrow H^{a+b}(Tot(T^{\dt\dt}))$$ as above, but it is more
% subtle.

We have semipurity that says $H_Z^i(U) =0$ for $i<2codim(Z)$.
Therefore $\bar E^{-a,b}_1$ vanishes below the line $b=2q$. Thus $\bar
E_2^{-a,2q}$ maps onto $\bar E_\infty^{-a,2q}\subseteq \bar E^{2q-a}$
which in turn maps to $H^{2q-a}(U)$.  This map can be described more
explicitly using the following diagram
$$
\xymatrix{
  \varinjlim H^{2q}_Z(U\times \A^{a}, U\times \partial\A^{a})\ar@{>>}[r]\ar[d] & \ker[d_1:\bar E_1^{-a,2q}\to \bar E_1^{-a+1,2q}]\ar[d] \\
  H^{2q}(U\times \A^{a}, U\times \partial\A^{a})\ar[r]^{\sim} &
  H^{2q-a}(U) }
$$
where $\partial \A^{a}$ is the union of codimension one faces. The top
line comes from the long exact sequence for the pair $(U\times
\A^{a},U\times \partial \A^{a})$. The map labelled $\sim$ is an
isomorphism by the K\"unneth formula. This description shows that the
map is compatible with mixed Hodge structures, provided we use
$H^{2q-a}(U)(q)$ at the last step.

We have a map of complexes
$$ c:C^q(U,\dt)\to E_1^{\dt,2q}$$
given by sending a cycle to its fundamental class, which induces
$$ c:\bar C^q(U,\dt)\to \bar E_1^{\dt,2q}$$
This induces a map
$$reg:CH^q(U,n)\to \bar E_2^{-n,2q}\to H^{2q-n}(U)$$
By the previous remark, the image lies in the group of cycles of type
$(0,0)$ in $ H^{2q-n}(U,\Z(q))$.

\subsection{Properties}
We now show that $reg$ is compatible with pushforwards, pullbacks, and
products. Also that
$$reg:CH^1(U,1)\cong \OO(U)^*\to H^1(U,\Z(1))$$
is the connecting map associated to the exponential sequence.

Given a proper map $f:U\to V$, and cycle $\alpha\in C^q(U\times \A^n)$
we can push it forward in the usual way \cite{fulton} to get a cycle
in $C^{q+c}(U\times \A^n)$ where $c=\dim V-\dim U$.  This can be seen
to define map of complexes $f_*:\bar C^q(U,\dt)\to \bar
C^{q+c}(V,\dt)$ \cite{bloch}. Note we also have a pushforward on the
spectral sequences
$$f_*:\bar E_1^{ab}(U)\to \bar E_1^{a,b+2c}(V),$$
which can be checked to be compatible with the cycle map $c$ defined
above (cf \cite[\S 19.1]{fulton}). Thus $reg$ is compatible with
pushforward.

Bloch originally described the groups $CH^q(U,n)$ as the cohomology of the
complex $z^q(U,\dt)\subset Z^q(U\times \A^\dt)$.  In this setting it
is no longer necessary to divide out the degenerate cycles. There is a
spectral sequence ${}_\dt E_1^{ab}(U)$ analogous to $E_1^{ab}(U)$, and
$reg$ can be described in terms of a map $z^q(U,\dt)\to {}_\dt
E_1^{\dt,2q2}(U)$ as above \cite{bloch1}.  If $V$ is affine then
Levine \cite[part I chap II,\S 3.5]{levine} showed that $z^q(V,\dt)$
is quasiisomorphic to a subcomplex $z^q(V,\dt)_f$ consisting of cycles
whose scheme theoretic pullback defines a cycle in $z^q(U,\dt)$. One
can see that one has a commutative diagram
$$
\xymatrix{
  z^q(U,\dt)_f\ar[r]\ar[d] & {}_\dt E_1^{\dt,2q}(U)\ar[d] \\
  z^q(V,\dt)\ar[r] & {}_\dt E_1^{\dt,2q}(V) }
$$
This shows compatibility with $reg$ and pullbacks when $V$ is affine.
The general case can be reduced to this using the Mayer-Vietoris for
$H^*$ and $CH^*$ \cite[part I, chap II]{levine} and the $5$-lemma.

Returning to the cubic viewpoint, the external product
$$CH^q(U,n)\times CH^p(V,m)\to CH^{q+p}(U\times V, n+m)$$
is induced by the map of complexes
$$C^q(U,\dt)\otimes C^p(V,\dt)\to C^{q+p}(U\times V,\dt)$$
given by $Z\otimes W\mapsto Z\times W$. It is fairly clear that
$c(Z\times W)=c(Z)\cup c(W)$, and therefore $reg$ preserves external
products. The cup product which is the external product followed by
the pullback under the diagonal map $\Delta:U\to U\times U$ must
therefore also be preserved.

By Bloch \cite{bloch}, $CH^1(U,1)= \OO(U)^*$.

\begin{prop}
  $reg:CH^1(U,1)\to H^1(U,\Z(1))$ coincides with the connecting map
  associated with the exponential sequence, at least up to sign.
\end{prop}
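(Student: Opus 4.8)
The plan is to represent $u\in \OO(U)^*=CH^1(U,1)$ by an explicit cycle and then, by naturality, to reduce the entire comparison to a single rank-one computation on $\G_m$. Recall from Bloch that the class of $u$ is represented by a cycle $Z_u$ which may be taken to be (the class of) the graph of $u$ inside $U\times\G_m\subset U\times\A^1$; since $u$ is a unit this graph misses the faces and so defines a genuine class in the normalized complex. By the construction of $reg$, the class $reg(u)$ is obtained by taking the fundamental class of $Z_u$ in $\varinjlim H^2_{Z_u}(U\times\A^1,U\times\partial\A^1)$, forgetting supports to land in $H^2(U\times\A^1,U\times\partial\A^1)$, and identifying the latter with $H^1(U,\Z(1))$ via the K\"unneth isomorphism appearing in the explicit description of $reg$. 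On the other side, write $\delta(u)\in H^1(U,\Z(1))$ for the image of $u$ under the connecting map of the exponential sequence $0\to\Z(1)\to\OO^{an}\to(\OO^{an})^*\to 0$. I would first record that both $u\mapsto reg(u)$ and $u\mapsto\delta(u)$ are natural in $U$: for $reg$ this is exactly the compatibility with pullbacks established in the previous subsection, while for $\delta$ it is the naturality of the connecting homomorphism under pullback of the functorial exponential sequence.

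Next I would use this naturality to reduce to the universal unit. Every pair $(U,u)$ is the pullback of $(\G_m,t)$ along the morphism $u\colon U\to\G_m$, that is $u=u^*t$, where $t$ is the standard coordinate and is the universal element of the functor $\OO(\,\cdot\,)^*$ represented by $\G_m$. Since $\G_m$ is affine, the pullback compatibility applies directly and gives $reg_U(u)=u^*reg_{\G_m}(t)$, and likewise $\delta_U(u)=u^*\delta_{\G_m}(t)$. Hence it suffices to prove the single identity $reg_{\G_m}(t)=\pm\,\delta_{\G_m}(t)$ in $H^1(\G_m,\Z(1))\cong\Z$; the resulting sign is then automatically uniform in $(U,u)$.

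Finally I would carry out the computation on $\G_m$. The class $\delta_{\G_m}(t)$ is the standard generator of $H^1(\G_m,\Z(1))\cong\Z$: the connecting map kills the constants $\C^*\subset\OO(\G_m)^*$ and sends the coordinate $t$, whose logarithm has monodromy $2\pi i$, to a generator. It therefore remains to show that $reg_{\G_m}(t)$ is also a generator, so that the two agree up to sign. For this I would trace the fundamental class of the graph $\{(s,t):t=s\}$ through the isomorphism $H^2(\G_m\times\A^1,\G_m\times\partial\A^1)\xrightarrow{\sim}H^1(\G_m,\Z(1))$ and pair it against the generator of $H_1(\G_m,\Z)$ given by the loop $|s|=1$; the pairing is the intersection number of the graph with the relative $2$-chain lying over that loop, which is the winding number of $t\mapsto t$ around the circle, namely $\pm 1$. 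Equivalently, one may invoke compatibility of $reg$ with the localization (residue) boundary $H^1(\G_m,\Z(1))\xrightarrow{\sim}H^0(\mathrm{pt},\Z)$ together with $\mathrm{ord}_0(t)=1$. The main obstacle is precisely this last normalization: controlling the signs and the Tate twist introduced by the K\"unneth isomorphism, which is exactly why the statement is asserted only up to sign.
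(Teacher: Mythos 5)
Your argument is essentially correct, but it is a genuinely different proof from the one in the paper. The paper argues cycle-by-cycle: it builds a $3\times 3$ diagram of relative exponential sequences on $U\times\A^1$ relative to $U\times\{0,1\}$, interprets $H^1((1+\I)^*)$ as a relative Picard group, and shows that the fundamental class of any divisor representing $u$ in $H^2(U\times\A^1,U\times\{0,1\})$ equals $\pm c_1$ of the associated relative line bundle $\OO(Z)_{rel}$; the comparison with the connecting map then falls out of the diagram chase. You instead make a universal-element (Yoneda-style) reduction: both $reg$ and $\delta$ are natural transformations $\OO(\cdot)^*\to H^1(\cdot,\Z(1))$, the units functor is represented by $\G_m$, so everything reduces to checking $reg_{\G_m}(t)=\pm\,\delta_{\G_m}(t)$ in $H^1(\G_m,\Z(1))\cong\Z$, which holds because both classes are generators; naturality then makes the sign uniform, which is all the proposition asserts. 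Your route buys economy --- one rank-one computation instead of a relative Chern-class argument --- but at the cost of leaning on the pullback compatibility of $reg$, which in the paper rests on Levine's moving lemma (harmless here, since $\G_m$ is affine and that case is handled directly). The paper's route is heavier but more informative: it identifies $reg$ on $CH^1(U,1)$ with a relative first Chern class for an \emph{arbitrary} representing divisor, not just a graph, and does not invoke contravariant functoriality at all.

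One concrete inaccuracy to fix: in the paper's cubical convention the faces of $\A^1$ sit at $t=0$ and $t=1$, so the graph of a unit $u$ misses the face $t=0$ but meets $t=1$ along $u^{-1}(1)$; its cubical boundary is $\mp u^{-1}(1)\neq 0$ in general, so the graph is not a closed cycle in $\bar C^1(U,\dt)$. (Your description is correct in Totaro's convention, where the cube is $\PP^1\setminus\{1\}$ with faces $0,\infty$.) The repair is a M\"obius twist: represent $u$ by the graph of $u/(u-1)$ (equivalently, transport Totaro's representative along $t\mapsto t/(t-1)$), which misses both faces; in particular the universal computation on $\G_m$ should use $Z=\{t=s/(s-1)\}$ rather than $\{t=s\}$, and the winding-number pairing still yields $\pm 1$, so the conclusion stands.
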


Here is a sketch.  Set $\Delta=\A^1$. Let $\I$ denote the ideal sheaf
of $U\times \partial \Delta= U\times \{0,1\}$ in $U\times \Delta$.
Let $j:\Delta-\{0,1\}\to \Delta$ be the inclusion.  Consider the
diagram
$$
\xymatrix{
  & 0\ar[d] & 0\ar[d] & 0\ar[d] &  \\
  0\ar[r] & j_!\Z(1)\ar[r]\ar[d] & \Z_{U\times \Delta}(1)\ar[r]\ar[d] & \Z_{U\times \{0,1\}}(1)\ar[r]\ar[d] & 0 \\
  0\ar[r] & \I\ar[r]\ar[d]^{\exp} & \OO_{U\times \Delta}\ar[r]\ar[d]^{\exp} & \OO_{U\times \{0,1\}}\ar[r]\ar[d]^{\exp} & 0 \\
  1\ar[r] & (1+\I)^*\ar[r]\ar[d] & \OO_{U\times \Delta}^*\ar[r]^{r}\ar[d] & \OO_{U\times \{0,1\}}^*\ar[r]\ar[d] & 1 \\
  & 1 & 1 & 1 & }
$$
where $(1+\I)^*$ is defined as the kernel of $r$. From this we obtain
a diagram
$$
\xymatrix{
  & \OO(U\times\{0,1\})^*\ar[r]\ar[d] & H^1((1+\I)^*)\ar[d]^{c_1} \\
  H^1(U\times \Delta)\ar[r] & H^1(U\times\{0,1\},\Z)\ar[r]\ar[d] & H^2(U\times\Delta,U\times\{0,1\}) \\
  & H^1(U\times \{0\},\Z)\ar[ru]^{\sim} & }
$$
The cohomology group $H^1(U,(1+\I)^*)$ is a relative Picard group,
which can be described in terms of certain divisor classes on $U\times
\Delta$ \cite[\S 6]{bloch}.  In particular, a divisor $Z$ in $\ker
[C^1(U,1)\to C^1(U,0)]$, locally defined by $f_i=0$, gives an element
$\OO(Z)_{rel}= \{f_i/f_j\}$ of $H^1((1+\I)^*)$.  By refining the usual
arguments, one can see that the fundamental class of $Z$ in
$H^2(U\times \Delta,U\times\{0,1\})$ coincides with $\pm
c_1(\OO(Z)_{rel})$.

\subsection{\'Etale version}

For a scheme $U$ over a field $k$ let $\bar{U}$ denote $U\times_k
\bar{k}$. Choose a prime $l\not= char(k)$.  We can then define a
double complex
$$S^{a\dt}(U) = \varinjlim_{Z\in C^q(U,-a)} \Gamma_Z(\bar{U}\times
\A^{-a}, I^{\dt}_{-a}(q))$$ as above, where $I^{\dt}_n(q)$ is an
injective resolution of the \'etale sheaf $\mu_{l^N}^{\otimes q}$ on
$\bar U\times \A^n$. In this way, we obtain spectral sequences
$E_1^{ab}, \bar E_1^{ab}$ and ${}'\bar E_1^{ab}$ as before, and a
cycle map
$$\bar{C}^q(U,n) \to \bar{E}_1^{-n,2q}$$
This induces the regulator map to \'etale cohomology
$$reg:CH^q(U,n) \to \bar{E}_2^{-n,2q} \to H^{2q-n}_{et}(\bar{U},\mu_{l^n}(q))$$ 
Moreover, the image lies in the $G_k$-invariant part.  Passing to the
limit yields a map
$$reg:CH^q(U,n)\to  H^{2q-n}_{et}(\bar{U},\Z_l(q))^{G_k}$$

Functorial properties of this regulator map follows in the same way as
the singular case.  Also as in singular case, this regulator map on
$CH^1(U,1)$ can be described explicitly.  From the Kummer sequence
$$ 1 \to \mu_{l^N} \to \OO^*_{\bar U} \stackrel {l^N} {\to}\OO^*_{\bar U} \to 1$$
the connecting homomorphism induces the map
$$\OO^*(U)/l^N\OO^*(U) \to H^1_{et}(\bar{U},\mu_{l^N})$$
Taking the inverse limit over $N$, we get the map
$$CH^1(U,1)\to CH^1(U,1)\otimes \Z_l \cong \varprojlim_N\OO^*(U)/l^N\OO^*(U)  \to H^1_{et}(\bar{U},\Z_l(1))$$
This is the same as the regulator map. The proof is similar to the
argument with the exponential sequence in the singular case.

\section{Invariant theory}

This section  refines some results from our earlier
paper \cite{AK}. We start by reviewing some notation from that paper.
The category of rational mixed Hodge structures forms a neutral
Tannakian category over $\Q$.  Let $\langle H\rangle$ denote the
Tannakian category generated by $H$. This is the full subcategory
consisting of all subquotients of tensor powers $T^{m,n}H=H^{\otimes
  m}\otimes (H^*)^{\otimes n}$. This construction extends to any set
of Hodge structures.  The Mumford-Tate group $MT(H)$ is the group of
tensor automorphisms of the forgetful functor from $\langle H\rangle$
to $\Q$-vector spaces.  By Tannaka duality $\langle H\rangle$ is
equivalent to the category of representations of this group.  When $H$
is a pure Hodge structure, $MT(H)$ can be defined in a more elementary
fashion as the smallest $\Q$-algebraic group whose real points
contains the image of the torus defining the Hodge structure.  We
define two auxiliary groups. The extended Mumford-Tate group $EMT(H)$
is $MT(\langle H, \QQ(1)\rangle)$ (some authors consider this to be
the Mumford-Tate group). The special Mumford-Tate group $SMT(H) =
\ker[EMT(H)\to \G_m]$ with respect to the map that is induced by the
inclusion $\langle \Q(1)\rangle\subset \langle H, \QQ(1)\rangle$.

Let $H$ be the first cohomology of a smooth quasi projective 
variety. 
% or more generally a mixed Hodge structure of type $\{(1,0),(0,1),(1,1)\}$.  
Then we have a description of $SMT(H)$ 
given by

\begin{lemma}\label{lemma:SMT}
  As a subgroup of $GL(H) = GL(V\oplus W)$
    $$ SMT(U)=
    \{\begin{pmatrix}
      I & 0 \\
      f & S
    \end{pmatrix} \mid S\in SMT(W) \text{ and $f\in \Phi $}\}.$$
  \end{lemma}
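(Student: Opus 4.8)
The plan is to read the matrix description directly off the weight filtration of $H=H^1(U)$. First I would record the shape of that filtration: $W:=W_1H$ is the pure weight-one part, namely the image of $H^1(X)$ for a smooth compactification $X\supseteq U$, while the quotient $V:=\mathrm{gr}^W_2H$ is pure of weight two and of Tate type. Indeed the Gysin sequence embeds $V$ into a sum of copies of $\Q(-1)$ indexed by the boundary components, so $V\cong\Q(-1)^{\oplus r}$. Fixing a $\Q$-linear splitting $H=V\oplus W$, the subspace $W$ is a sub-object in the Tannakian category $\langle H,\Q(1)\rangle$; since every tensor automorphism of the fibre functor preserves sub-objects, each element of $EMT(H)$, hence of $SMT(U)=SMT(H)$, is block lower-triangular $\begin{pmatrix} a & 0 \\ f & S \end{pmatrix}$ with $a\in GL(V)$, $S\in GL(W)$ and $f\in\Hom(V,W)$.

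Next I would pin down the two diagonal blocks. For the upper-left block, the induced action on $V\cong\Q(-1)^{\oplus r}$ factors through the Tate character, and by definition the homomorphism $EMT(H)\to\G_m$ whose kernel is $SMT(H)$ is exactly the character recording the action on $\Q(1)$, equivalently on its dual $\Q(-1)$. Thus every element of $SMT(H)$ acts trivially on $V$ and $a=I$. For the lower-right block, restriction of automorphisms to the sub-object $W$ is the Tannakian projection attached to the inclusion $\langle W,\Q(1)\rangle\subseteq\langle H,\Q(1)\rangle$, i.e. $EMT(H)\to EMT(W)$; as $W$ is a sub-object this projection is faithfully flat, hence surjective, and it is visibly compatible with the two maps to $\G_m$ induced by $\Q(1)$. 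Passing to kernels, it restricts to a surjection $SMT(H)\twoheadrightarrow SMT(W)$, which is precisely the map $M\mapsto S$. This simultaneously shows that $S\in SMT(W)$ and that every element of $SMT(W)$ occurs.

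It remains to identify the off-diagonal entry $f$ with $\Phi$. The elements with $a=I$ and $S=I$ form the unipotent radical of $SMT(H)$, and under $M\mapsto f$ this radical is carried isomorphically onto a $\Q$-subspace of $\Hom(V,W)=\Hom(\mathrm{gr}^W_2H,\mathrm{gr}^W_1H)$, stable under the $SMT(W)$-action $f\mapsto Sf$ by normality. Since we work over $\Q$, the Levi decomposition writes $SMT(H)$ as the semidirect product of this radical with a Levi subgroup mapping isomorphically to the reductive quotient $SMT(W)$; hence, once the radical is shown to equal $\Phi$, the group is exactly the asserted set $\{(f,S):f\in\Phi,\ S\in SMT(W)\}$. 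The real content, and the step I expect to be the main obstacle, is this last identification of the radical with $\Phi$: it is governed by the extension class of $0\to W\to H\to V\to 0$ in $\mathrm{Ext}^1_{MHS}(V,W)$, and one must show that the smallest sub-Hodge-structure of $\Hom(V,W)$ carrying that class is exactly $\Phi$. I expect this to rest on Carlson's explicit presentation of $\mathrm{Ext}^1_{MHS}$ and on the polarizability of $W$, and it is here that the refinement of the computation from \cite{AK} does the work; the weight-filtration and special-group arguments above are otherwise formal.
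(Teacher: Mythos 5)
The paper gives no argument of its own here---its ``proof'' is the citation to \cite{AK}---so your reconstruction has to stand on its own, and it has a genuine gap at the decisive step; moreover you have located the difficulty in the wrong place. First, $\Phi$ is not something that needs to be \emph{identified} with the unipotent radical via Carlson's description of $\mathrm{Ext}^1_{MHS}$: exactly as in the Tate analogue in this paper, $\Phi$ is \emph{defined} as $\ker[SMT(U)\to SMT(H/W\oplus W)]$, i.e.\ it is by definition the set of $f$'s occurring over $S=I$, and that this kernel is a $\Q$-subspace of $\Hom(V,W)$ is immediate in characteristic zero (a closed subgroup of a vector group is a subspace). The finer description of $\Phi$ through the extension class of $0\to W\to H\to V\to 0$ is what feeds Lemma \ref{lemma:Ureduction}, not this lemma. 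The real content, which your Levi sentence waves through, is the set equality itself: a priori the lifts of a given $S\in SMT(W)$ form a coset $f_S+\Phi$ (since $\left(\begin{smallmatrix} I & 0\\ f & I\end{smallmatrix}\right)\left(\begin{smallmatrix} I & 0\\ f_S & S\end{smallmatrix}\right)=\left(\begin{smallmatrix} I & 0\\ f+f_S & S\end{smallmatrix}\right)$), and the lemma asserts $0\in f_S+\Phi$ for every $S$, i.e.\ that the block-diagonal copy of $SMT(W)$ lies in $SMT(U)$. Deducing this from the mere existence of a Levi decomposition is a non sequitur: an abstract Levi subgroup $L\subseteq SMT(H)$ mapping isomorphically onto $SMT(W)$ need not be the block-diagonal copy relative to the chosen complement $V$. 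Indeed, for an \emph{arbitrary} complement the asserted equality can fail: take $U=E\setminus\{0,P\}$ with $E$ an elliptic curve and $P\neq 0$ torsion, so the extension is rationally split and $\Phi=0$; in coordinates given by a complement shifted by $0\neq u\in\Hom(V,W)$ one computes $SMT(H)=\{\left(\begin{smallmatrix} I & 0\\ (I-S)u & S\end{smallmatrix}\right)\mid S\in SMT(W)\}$, which is not of the form $\{f\in\Phi,\ S\in SMT(W)\}$.

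The missing argument runs as follows. Since $W$ is pure and polarizable, $SMT(W)$ is reductive (kernel of a character of the reductive group $EMT(W)$; in the Tate case this is where Faltings' semisimplicity theorem enters, via corollary \ref{cor:SMT}), so $\Phi$ is the unipotent radical and Levi's theorem gives $SMT(H)=\Phi\rtimes L$ with $L\cong SMT(W)$. Because $L$ maps isomorphically onto $SMT(W)$, it is also a Levi subgroup of the ambient group $G=\Hom(V,W)\rtimes SMT(W)$, whose unipotent radical is all of $\Hom(V,W)$; by Mostow's conjugacy theorem (valid over $\Q$, with conjugating element in $R_u(G)(\Q)$) there is $u\in\Hom(V,W)$ such that $n_u L n_u^{-1}$ is block diagonal, where $n_u=\left(\begin{smallmatrix} I & 0\\ u & I\end{smallmatrix}\right)$. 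Conjugation by $n_u$ sends $f\mapsto f+(I-S)u$ and fixes $\Phi$ elementwise, so it amounts to replacing the complement $V$ by its graph translate $(I+u)V$---a legitimate normalization, since the complement was an arbitrary choice and $\Phi$, viewed canonically inside $\Hom(H/W,W)$, does not depend on it. With $V$ so normalized, $SMT(U)=\Phi\rtimes SMT(W)$ in block form, which is the lemma. Your earlier steps (block-triangular shape from preservation of the sub-object $W$, $a=I$ because $V\cong\Q(-1)^{\oplus r}$ and $SMT$ kills the Tate character, surjectivity $SMT(H)\onto SMT(W)$ from the Tannakian restriction) are correct; the proof fails precisely at the step you yourself flagged as uncertain, and the Carlson/Ext machinery you propose there is not what closes it.
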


\begin{proof}
  See \cite{AK}.
\end{proof}

We want to refine this slightly.  We define three subspaces
$V_i\subset H$. Let $V_3 = W_1H$, and let $V_1 \subseteq H^{SMT(H)}$
be a complement to $V_3$ in $ W_1H+H^{SMT(H)} $, and finally choose
$V_2$ to be a complement to $V_1+V_3$ in $H$. Thus we have a
decomposition
\begin{equation}
  \label{eq:decompH}
  H = V_1\oplus V_2\oplus V_3  
\end{equation}
into vector spaces. Under this decomposition, $W_1H$ maps to $V_3$.
For the arguments below, it is convenient to assign weights to elements
of a mixed Hodge structure. Say that $x$ has weight $k$ if $x\in W_k$
and $x\notin W_{k-1}$. With this convention, we see that elements of $V_1$ and
$V_2$ are of weight 2 and elements of $V_3$ are of weight 1.

Let $G_c=SMT(V_3)$.
With respect to \eqref{eq:decompH}, $SMT(H)$ is a subgroup of the following matrix
group:
$$
\{\begin{pmatrix}
  I& 0 & 0\\
  0 &I& 0\\
  0 & f & S
\end{pmatrix} \mid S\in G_c\text{ and $f\in Hom(V_2,V_3)$}\}.$$
The unipotent radical $U(SMT(H))$ lies in the subgroup
\begin{equation}
  \label{eq:USMT}
  \{\begin{pmatrix}
    I& 0 & 0\\
    0 &I& 0\\
    0 & f & I 
  \end{pmatrix} \mid f\in Hom(V_2,V_3)\}.
\end{equation}

\begin{lemma}\label{lemma:Ureduction}
  For any nonzero $u\in V_2$, we can find a $g\in U(SMT(H)) $ such
  that $gu\not=u$, or equivalently such that $f(u)\not=0$ with respect
  to the matrix (\ref{eq:USMT}).
\end{lemma}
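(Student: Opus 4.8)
The plan is to reformulate the claim as the vanishing of the subspace $V_2^U\subseteq V_2$ consisting of those $u$ fixed by the entire unipotent radical $U:=U(SMT(H))$; by \eqref{eq:USMT} this is exactly the set of $u$ with $f(u)=0$ for every $g\in U$, so $V_2^U=0$ is precisely the assertion. First I would identify the $U$-fixed subspace of all of $H$. Reading the action off the matrix in \eqref{eq:USMT}, an element $v=v_1+v_2+v_3$ is sent to $v_1+v_2+\bigl(f(v_2)+v_3\bigr)$, so it is fixed exactly when $f(v_2)=0$; hence
$$H^{U}=V_1\oplus V_2^U\oplus V_3.$$
Since $U$ is normal in $SMT(H)$, the space $H^U$ is stable under $SMT(H)$, and as $U$ acts trivially on it, $H^U$ becomes a module over the quotient $\bar G:=SMT(H)/U$.

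Next I would record two features of this $\bar G$-module. On the one hand, reading the same matrix modulo $V_3$ shows that $SMT(H)$, and therefore $\bar G$, acts trivially on $H^U/V_3\cong V_1\oplus V_2^U$; writing $\pi\colon H^U\to V_1\oplus V_2^U$ for the projection, we thus have a short exact sequence of $\bar G$-modules
$$0\to V_3\to H^U\to V_1\oplus V_2^U\to 0$$
with trivial action on the quotient. On the other hand, taking $\bar G$-invariants recovers $(H^U)^{\bar G}=H^{SMT(H)}$, since $U$ already fixes $H^U$. The decisive structural input is that $\bar G$, being the quotient of a $\Q$-algebraic group by its unipotent radical, is reductive, hence linearly reductive.

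The core of the argument is then a splitting. Linear reductivity supplies a $\bar G$-equivariant section $s$ of $\pi$; because $\bar G$ acts trivially on the target, $s$ lands in the invariants, so $\pi(H^{SMT(H)})=V_1\oplus V_2^U$. But the very choice of $V_1$ gives $W_1H+H^{SMT(H)}=V_1\oplus V_3$, whence $H^{SMT(H)}\subseteq V_1\oplus V_3$; since $\pi$ kills $V_3$ and fixes $V_1$, this forces $\pi(H^{SMT(H)})\subseteq V_1$. Comparing the two computations yields $V_1\oplus V_2^U\subseteq V_1$, i.e. $V_2^U=0$, which is the lemma.

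I expect the main obstacle to be conceptual rather than computational: recognizing that the right object is $H^U$ viewed as a representation of the reductive quotient $SMT(H)/U$, so that semisimplicity can be exploited to split the weight filtration, and then colliding the resulting surjection onto $V_1\oplus V_2^U$ with the a priori containment $H^{SMT(H)}\subseteq V_1\oplus V_3$ built into the definition of $V_1$. Once this setup is in place the verification is formal; the single point requiring care is confirming that $\bar G$ acts trivially on $H^U/V_3$, which is exactly where the weight-$2$ character of $V_1$ and $V_2$ — equivalently the identity block in \eqref{eq:USMT} — is used.
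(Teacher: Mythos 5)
Your proof is correct. Note that the paper's own ``proof'' of Lemma \ref{lemma:Ureduction} is just the citation to \cite{AK}; the argument there runs, in substance, through the explicit product description of Lemma \ref{lemma:SMT}: reductivity of $SMT(W)$ (polarizability in the Hodge case; Faltings' theorem in the Tate analogue, cf.\ Corollary \ref{cor:SMT}) identifies the unipotent radical with $\Phi$ itself, so a $u\in V_2$ fixed by $U(SMT(H))$ satisfies $f(u)=0$ for every $f\in\Phi$; since every $g\in SMT(H)$ is of the form $\begin{pmatrix} I&0\\ f&S\end{pmatrix}$ with $f\in\Phi$ and sends $u$ to $u+f(u)$, such a $u$ lies in $H^{SMT(H)}\subseteq W_1H+H^{SMT(H)}=V_1\oplus V_3$, forcing $u\in V_2\cap(V_1\oplus V_3)=0$. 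You reach the same final collision with the defining property of $V_1$, but you upgrade $U$-invariance to full invariance by a different mechanism: equivariantly splitting $0\to V_3\to H^U\to V_1\oplus V_2^U\to 0$ over $\bar G=SMT(H)/U(SMT(H))$ and comparing $\pi(H^{SMT(H)})=V_1\oplus V_2^U$ with $\pi(V_1\oplus V_3)=V_1$. What your route buys: you use only the containment \eqref{eq:USMT} and the identity blocks, never the full product structure of Lemma \ref{lemma:SMT} nor the identification $U(SMT(H))=\Phi$, and the reductivity you invoke is automatic for the quotient of any linear algebraic group by its unipotent radical in characteristic zero, with no appeal to polarizations or to Faltings; this makes it transparent why, as the paper asserts only by analogy, the statement holds verbatim for $\overline{SMG}(H)$. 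One small precision: the trivial action of $\bar G$ on $H^U/V_3$ comes from the identity blocks in the matrix description of the full group $SMT(H)$ displayed just before \eqref{eq:USMT} (equivalently, from $SMT$ acting trivially on $\mathrm{Gr}^W_2H\cong \Q(-1)^N$), not from \eqref{eq:USMT} itself, which constrains only the unipotent radical.
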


\begin{proof}
  See \cite{AK}.
\end{proof}

Let $BH^{n,s}(H)$ denote the space of Beilinson-Hodge cycles of weight $2s$ in
$H^{\otimes n}$. More precisely,
\begin{eqnarray*}
  BH^{n,s}(H) &=&Hom_{MHS}(\Q(-s),H^{\otimes n})  \\
  &\subseteq&Hom_{MHS}(\Q(-s),(H^{split})^{\otimes n}) 
\end{eqnarray*}

Note that if $n> 2s$ then $BH^{n,s}(H)=0$ hence the results stated below are 
trivially true. So we shall also assume $n\le 2s$.
To simplify book keeping, we will usually write tuples $(j_1,\ldots
j_n)$ as strings $j_1\ldots j_n$.  In particular, juxtaposition is
used to denote concatenation of strings, with exponents used for
repetition. For example, $1^2\,2\,3^0= 1\,1\,2$.
% This is combined with standard set theoretc notation in a fairly
% obvious manner. For example,
% $$\{1,3\}^22\{2,3\}= \{1,3\}\times \{1,3\}\times \{2\}\times \{2,3\}=
% \{1122, 1322, 3122, 3322, 1123,\ldots\}$$
Note that
\begin{equation}
  \label{eq:decompHn}
  H^{\otimes n}=\bigoplus_{j_1,\ldots, j_n} V(j_1\ldots j_n),
\end{equation}
where
$$ V(j_1\ldots j_n)=V_{j_1}\otimes\ldots\otimes V_{j_n}$$

Define $|j_1j_2 \ldots j_n|_3 = \#\{i:j_i=3\}$.

\begin{lemma}\label{lemma:weights}
The weight of a nonzero element of   $V(j_1\ldots j_n)$ is $2n-|j_1\ldots j_n|_3$.
\end{lemma}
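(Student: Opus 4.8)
The plan is to compute the weight additively using the weight filtration on the tensor product. Recall the convention just established: elements of $V_1$ and $V_2$ have weight $2$, while elements of $V_3$ have weight $1$. Since the weight filtration on a tensor product of mixed Hodge structures satisfies $W_k(A\otimes B) = \sum_{p+q=k} W_p A \otimes W_q B$, the weight of a nonzero decomposable element $x_1\otimes\cdots\otimes x_n$ is the sum of the weights of the factors $x_i$, provided we check that the graded pieces behave as expected. So the weight of a nonzero element of $V(j_1\ldots j_n) = V_{j_1}\otimes\cdots\otimes V_{j_n}$ should be the sum $\sum_{i=1}^n w(j_i)$, where $w(j_i)=2$ if $j_i\in\{1,2\}$ and $w(j_i)=1$ if $j_i=3$.

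First I would verify that each $V_i$, as a subquotient of the mixed Hodge structure $H$, is in fact pure of the asserted weight, so that $V(j_1\ldots j_n)$ is itself pure. Here $V_3 = W_1 H$ is pure of weight $1$ since $H=H^1(U)$ of a smooth variety has weights only in $\{1,2\}$, so $W_1 H = \mathrm{Gr}^W_1 H$ carries the pure weight-$1$ part. The complements $V_1, V_2$ map isomorphically onto subspaces of $\mathrm{Gr}^W_2 H$ and hence are pure of weight $2$. This reduces the statement to a clean computation with pure Hodge structures, where tensor product of pure structures of weights $a$ and $b$ is pure of weight $a+b$.

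Next I would carry out the counting. Among the $n$ indices, exactly $|j_1\ldots j_n|_3$ of them equal $3$ (contributing weight $1$ each), and the remaining $n - |j_1\ldots j_n|_3$ indices lie in $\{1,2\}$ (contributing weight $2$ each). Summing, the total weight is
$$ 2\bigl(n - |j_1\ldots j_n|_3\bigr) + 1\cdot|j_1\ldots j_n|_3 = 2n - |j_1\ldots j_n|_3, $$
which is exactly the claimed formula. The nonvanishing hypothesis guarantees the tensor factor is genuinely present, so the weight is well-defined.

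The main obstacle, if any, is the bookkeeping around $V_1$: it is defined as a complement inside $W_1 H + H^{SMT(H)}$ to $V_3 = W_1 H$, so I must confirm that $V_1$ really sits in weight $2$ rather than inheriting weight-$1$ contributions from $W_1 H$. The resolution is that $V_1 \subseteq H^{SMT(H)}$ by construction and meets $V_3$ trivially, so its nonzero elements are not in $W_1 H$ and therefore have weight $2$ under the stated convention. Once this purity of each $V_i$ is pinned down, the weight additivity for tensor products of pure Hodge structures finishes the argument, and I expect the rest to be a routine summation.
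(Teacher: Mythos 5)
Your proof is correct and takes essentially the same approach as the paper, whose entire argument is that elements of $V_1$ and $V_2$ have weight $2$ while elements of $V_3$ have weight $1$, so the weights add up to $2(n-|j_1\ldots j_n|_3)+|j_1\ldots j_n|_3$. Your extra care --- checking that $V_1$ and $V_2$ inject into $\mathrm{Gr}^W_2 H$ and that $V_3=W_1H$ is pure, so that $V(j_1\ldots j_n)$ lands injectively in a single graded piece of $H^{\otimes n}$ --- merely spells out what the paper dismisses as trivial.
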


\begin{proof}
  This follows trivially from the fact that elements of $V_1$ and
$V_2$ are of weight 2 and elements of $V_3$ are of weight 1. 
\end{proof}

Define the
space of elementary Hodge cycles $ EH^{n,s}(H)\subseteq H^{\otimes n}$ as
the subspace generated by products of elements of $V_1$ and
$V_3^{G_c}$ of weight $2s$. Thus from lemma \ref{lemma:weights},
\begin{equation}\label{eq:decompBHtriv}
  EH^{n,s}(H)=\bigoplus_{j_1,\ldots,j_n\in \{1,3\}, |j_1\ldots j_n|_3=2n-2s} 
  V(j_1\ldots j_n)^{G_c}
\end{equation}
where the action of $G_c$ on $V_1$ is trivial.  Note that $G_c$ can be
viewed as a quotient of $SMT(H)$ since $W$ is sub-Hodge structure of
$H$.

\begin{thm} \label{BH:main} $BH^{n,s}(H)=EH^{n,s}(H)$
\end{thm}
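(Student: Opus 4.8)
The plan is to translate the statement into invariant theory for $SMT(H)$ and then compute the invariants using the matrix description \eqref{eq:USMT} together with Lemma \ref{lemma:Ureduction}. First I would record the Tannakian translation. Since $SMT(H) = \ker[EMT(H)\to\G_m]$ and $\Q(-s)$ corresponds to a character of this $\G_m$, a morphism $\Q(-s)\to H^{\otimes n}$ is the same datum as an element of $H^{\otimes n}$ fixed by $SMT(H)$ and lying in the weight-$2s$ part:
\[
BH^{n,s}(H) = \big[(H^{\otimes n})^{SMT(H)}\big]_{\text{weight }2s}.
\]
Indeed $\G_m = EMT(H)/SMT(H)$ acts on the invariants through the weight cocharacter, and its $\Q(-s)$-eigenspace is exactly the weight-$2s$ part. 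The weight cocharacter can be chosen to respect the splitting \eqref{eq:decompH}, so by Lemma \ref{lemma:weights} the weight-$2s$ part of $H^{\otimes n}$ is $\bigoplus_{|j_1\ldots j_n|_3 = 2n-2s} V(j_1\ldots j_n)$. With this in hand the inclusion $EH^{n,s}(H)\subseteq BH^{n,s}(H)$ is immediate: each summand in \eqref{eq:decompBHtriv} has weight $2s$, is $G_c$-invariant by construction, and is fixed by the unipotent radical because the latter acts trivially on $V_1$ and on $V_3$; hence it is $SMT(H)$-invariant.

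For the reverse inclusion I take $\alpha\in (H^{\otimes n})^{SMT(H)}$ of weight $2s$ and expand it via \eqref{eq:decompHn} as $\alpha = \sum_{j} \alpha_j$ with $\alpha_j \in V(j)$ (here $j = j_1\ldots j_n$), where $\alpha_j = 0$ unless $|j|_3 = 2n-2s$. Invariance under $G_c = SMT(V_3)$ is harmless: since $G_c$ preserves each $V_i$, it respects \eqref{eq:decompHn}, so each $\alpha_j$ is separately $G_c$-invariant. The real content is invariance under the unipotent radical. Writing the element of \eqref{eq:USMT} as $\exp(N_f)$, where $N_f$ kills $V_1$ and $V_3$ and sends $v\in V_2$ to $f(v)\in V_3$, this invariance is equivalent to $N_f\alpha = 0$ for all $f\in Hom(V_2,V_3)$. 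As a derivation on $H^{\otimes n}$, $N_f$ turns one $V_2$-factor into a $V_3$-factor, so it is homogeneous of degree $-1$ for the grading of $\bigoplus_{|j|_3 = 2n-2s}V(j)$ by the number of indices equal to $2$. Hence $N_f\alpha = 0$ forces $N_f\alpha^{[m]} = 0$ for each homogeneous piece $\alpha^{[m]}$ (the part with exactly $m$ twos), and the problem reduces to the \emph{core claim}: if $\beta$ has pure weight $2s$ and exactly $m\ge 1$ twos, and $N_f\beta = 0$ for all $f$, then $\beta = 0$.

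I would prove the core claim by induction on $n$, peeling off one tensor slot. Contracting the last factor with a functional $\phi\in H^*$ that vanishes on $V_3$ commutes with the derivation in the sense that $N_f\big((\mathrm{id}^{\otimes (n-1)}\otimes\phi)\beta\big) = 0$, since the last-slot contribution $\phi\circ N_f$ vanishes ($N_f$ lands in $V_3$, which $\phi$ kills); the inductive hypothesis then removes $V_2$ from the first $n-1$ slots of every such contraction. Feeding this back into $N_f\beta = 0$ and isolating the component in which the $f$-created $V_3$ sits in the last slot reduces matters to the statement that a nonzero vector of $V_2$, obtained by contracting all the other slots, cannot be annihilated by every $f$ — which is exactly Lemma \ref{lemma:Ureduction}. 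The main obstacle is precisely this isolation step: for $m\ge 2$ a single target $V(j')$ receives contributions from several source components (one for each index of $j'$ equal to $3$), so the vanishing equations are coupled, and the contraction and leading-term bookkeeping must be arranged so that these cross terms do not interfere. Once the core claim shows that $\alpha$ is supported on $j\in\{1,3\}^n$, the fact that each $\alpha_j$ is $G_c$-invariant places $\alpha$ in $\bigoplus_{j\in\{1,3\}^n,\,|j|_3 = 2n-2s} V(j)^{G_c} = EH^{n,s}(H)$ by \eqref{eq:decompBHtriv}, completing the proof.
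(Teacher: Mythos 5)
There are two genuine gaps. First, your reduction to the ``core claim'' rests on the assertion that $\alpha_j=0$ unless $|j_1\ldots j_n|_3=2n-2s$, justified by a weight cocharacter of $EMT(H)$ ``chosen to respect the splitting \eqref{eq:decompH}''. No such cocharacter exists in general: \eqref{eq:decompH} is only a vector-space splitting, and $H$ is a non-split extension precisely when $\Phi\neq 0$, so an element of $EMT(H)$ lifting $t\in\G_m$ preserves only the weight \emph{filtration} (it can move $V_1\oplus V_2$ into $V_3$). Consequently the $\Q(-s)$-eigenspace condition only yields $\alpha\in W_{2s}H^{\otimes n}$, i.e.\ $|k_1\ldots k_n|_3\ge 2n-2s$ for every component, with equality for at least one component (because a nonzero element of $BH^{n,s}(H)$ has weight exactly $2s$). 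That the support is confined to the equality stratum, and indeed to strings in $\{1,3\}^n$, is part of the conclusion of Theorem \ref{BH:main}, so assuming it is circular. The same objection applies to your remark that ``each $\alpha_j$ is separately $G_c$-invariant'': $G_c$ is a \emph{quotient} of $SMT(H)$, not a subgroup, and a lift $g\in SMT(H)$ of $h\in G_c$ carries an $f$-component and does not respect \eqref{eq:decompHn}; componentwise $G_c$-invariance is only automatic once the support is already known to avoid the index $2$. (A smaller slip of the same kind: $SMT$-invariance gives $N_f\alpha=0$ only for $f\in\Phi$, not all of $Hom(V_2,V_3)$; you do invoke Lemma \ref{lemma:Ureduction} at the right moment, so this one is repairable.)

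Second, and decisively, you concede the crux yourself: for $m\ge 2$ the ``isolation step'' in your core claim is left open, with the coupled cancellation equations unresolved. That step is exactly where the content of the theorem lives, so the proposal is an outline rather than a proof. The paper's argument is organized differently and avoids your global support claim: it subtracts $\tau^{EH}$, notes that a nonzero remainder (still in $BH^{n,s}(H)$) must have a component $\tau'$ with $|j|_3=2n-2s$ exactly, and then kills such components one at a time with tailored group elements. If $\tau'$ involves an index $2$, Lemma \ref{lemma:Ureduction} produces $g\in U(SMT(H))$ so that $(g-I)\tau'$ has a nonzero component in a target $V(j^+)$ with $|j^+|_3=2n-2s+1$; since $g-I$ strictly increases $|\cdot|_3$, only terms with $|k|_3=2n-2s$ could cancel it, and the paper argues no other term maps there, contradicting invariance. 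If $\tau'$ lies in $V(3^{2n-2s}1^{2s-n})$ but is not $G_c$-invariant, a lift $g$ of a suitable $h\in G_c$ gives $(g-I)\tau'\neq 0$ inside the same component, and any canceling term would need $|k|_3<2n-2s$, violating the weight bound. Your $N_f$-derivation formulation and the two-count grading are a reasonable reformulation of the same unipotent-radical mechanism, but without a worked-out extremal or filtration argument for the coupled equations it does not close, whereas the paper's pointwise contradiction scheme (together with the weight inequality excluding sources) is what carries its proof.
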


\begin{proof}
  Since $SMT(H)$ acts trivially on $V_1$ and the action of $SMT(H)$ on
  $V_3$ factors through $G_c$, the action of $SMT(H)$ on $EH^{n,s}(H)$
 is trivial. Also elements of $EH^{n,s}(H)$ are of weight $2s$,
  hence they are Beilinson-Hodge cycles of weight $2s$ in $H^{\otimes
    n}$.  This proves $EH^{n,s}(H) \subseteq BH^{n,s}(H)$.

  Let $\tau \in BH^{n,s}(H)$, our goal is to show that
  $\tau\in EH^{n,s}(H)$.  Let us decompose
  \begin{equation}
    \label{eq:tau}
\tau =\sum \tau_{j_1\ldots j_n}    
  \end{equation}
with respect to \eqref{eq:decompHn}.  Let $\tau^{EH}$ be the sum of
those terms $\tau_{j_1\ldots j_n}$ which are in $EH^{n,s}(H)$. We replace $\tau$ by
$\tau-\tau^{EH}$. Then  it is enough to show that $\tau$ equals
$0$. Suppose that it is nonzero. Since $\tau\in W_{2s}H^{\otimes n}$, by
lemma \ref{lemma:weights} we have that $|k_1\ldots
k_n|_3\ge 2n-2s$ for every nonzero term  $\tau_{k_1\ldots k_n}$ of
\eqref{eq:tau}. Also since $\tau$ is nonzero of weight $2s$, 
$\tau$ must have a nonzero term $\tau'=\tau_{j_1\ldots j_n}$ so that $|j_1\ldots j_n|_3=
2n-2s$. Without loss of generality,
suppose that $j_1=j_2=\ldots =j_{2n-2s}=3$ and the remainder of the $j_i$'s
are either $1$ or $2$. When at least one of these is $2$, then we will derive
a contradiction. We have $j_1\ldots
j_n=3^{2n-2s}1^{n_1}2^{n_2}1^{n_3}\ldots$ with $n_2>0$.  So using \eqref{eq:USMT}
and the lemma~\ref{lemma:Ureduction}, we can obtain a $g\in U(SMT(H))$
so that $g\tau'-\tau'$ has a nonzero component in
$V(3^{2n-2s\,}1^{n_1\,}3\,2^{n_2-1}1^{n_3}\ldots)$.  Also if $\tau''$
is any other term in $\tau$ then its image under $g-I$ does not lie in
$V(3^{2n-2s\,}1^{n_1\,}3\,2^{n_2-1}1^{n_3}\ldots)$. This contradicts
the invariance of $\tau$ under the action of $SMT(H)$. So $j_1\ldots
j_n$ must be of the form $3^{2n-2s}1^{2s-n}$.

By assumption $\tau'\notin EH^{n,s}(H)$. So there
exists $h\in G_c$ so that $(h-I)\tau' \ne 0$. Let $g$ be a lift
of $h$ to $SMT(H)$. We have $0\ne (g-I)\tau' \in
V(3^{2n-2s}1^{2s-n})$. So again for $\tau$ to be invariant under the
$SMT(H)$ action, there must exist another term $\tau''=\tau_{k_1\ldots
  k_n}$ in \eqref{eq:tau} such that $(g-I)\tau'' \in
V(3^{2n-2s}1^{2s-n})$. For this to happen, $k_i=2$ for some $1\le i
\le 2n-2s$ and $k_i=1$ for all $i > 2n-2s$. This would imply that
$|k_1\ldots k_n|_3 < 2n-2s$
contradicting the previous inequality. This completes the proof.
\end{proof}

\subsection{Beilinson-Tate case}
Let $k$ be a finitely generated field over $\Q$.  In analogy with the
constructions of the Mumford-Tate group above, we define $MG(H)$ to be
the image of the map $G_k\to GL(H)$ for a $G_k$-module $H$.  Let
$EMG(H)=MG(H\oplus\Q_l(1))$.  This group acts on both factors $H$ and
$\Q_l(1)$. The second action determines a homomorphism $EMG(H)\to
\G_m$.  Let $SMG(H)= \ker[EMG(H)\to \G_m]$.

For a smooth variety $U$ over $k$, let $MG(U)$ and $SMG(U)$ denote
$MG(H^1_{et}(\bar{U},\Q_l))$ and $SMG(H^1_{et}(\bar{U},\Q_l))$
respectively.
% If $U$ is also projective then The Mumford-Tate conjecture states
% that $Lie MT(H^1(\bar{U}))\times \Q_l= Lie MT_G(U)$.
Recall that $H_{et}^i(\bar{U}, \Q_l)$ carries an increasing filtration
$W_j$ called the weight filtration \cite{deligne-poids}.  The space
$W_j$ is defined as the sum of the (generalized) eigenspaces of a
geometric Frobenius $F_m$, at an unramified place $m$, with
eigenvalues having norms $(Norm_{k/\Q}(m))^{j'/2}$ for some $j'\le j$.
When $i=1$, this can be described explicitly as follows.  Choose a
smooth compactification $X$ of $U$ (which exists by
\cite{hironaka-resolution}).  Then
$$0=W_0\subseteq W_1 =im[H^1_{et}(\bar{X},\Q_l)\to H^1_{et}(\bar{U}, \Q_l)]\subseteq W_2 =H_{et}^1(\bar{U}, \Q_l)$$

\begin{lemma}\label{weight}
  $MG(U)$ preserves the the weight filtration on
  $H^1_{et}(\bar{U},\Q_l)$.
\end{lemma}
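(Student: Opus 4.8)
The plan is to prove that $MG(U)$ preserves the weight filtration by showing that the Galois action respects the canonical description of $W_1$ as the image of $H^1_{et}(\bar X, \Q_l)$. The key observation is that the weight filtration on $H^1_{et}(\bar U, \Q_l)$ given in the excerpt is defined by an intrinsic geometric construction: $W_1$ is the image of the pullback map $H^1_{et}(\bar X, \Q_l) \to H^1_{et}(\bar U, \Q_l)$ induced by the open immersion $U \hookrightarrow X$, where $X$ is a smooth compactification defined over $k$ (after possibly replacing $k$ by a finite extension, or using that resolution of singularities over a field of characteristic zero produces a compactification with the same field of definition). Since this map arises from a morphism of $k$-varieties, the corresponding map on $\bar k$-cohomology is $G_k$-equivariant.

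First I would fix a smooth compactification $X$ of $U$ over $k$, so that $U = X \setminus D$ for a (normal crossings) divisor $D$, all defined over $k$. The open immersion $\iota \colon U \to X$ is a morphism of varieties over $k$, hence base-changing to $\bar k$ gives a $G_k$-equivariant morphism $\bar\iota \colon \bar U \to \bar X$. Functoriality of \'etale cohomology then yields a $G_k$-equivariant pullback $\bar\iota^* \colon H^1_{et}(\bar X, \Q_l) \to H^1_{et}(\bar U, \Q_l)$. By the explicit description recalled just before the lemma, $W_1 = \image(\bar\iota^*)$.

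The core of the argument is then immediate: the image of a $G_k$-equivariant linear map between $G_k$-modules is a $G_k$-stable subspace. Explicitly, for any $\sigma \in G_k$ and any $w = \bar\iota^*(\alpha) \in W_1$, equivariance gives $\sigma(w) = \sigma(\bar\iota^*(\alpha)) = \bar\iota^*(\sigma(\alpha)) \in \image(\bar\iota^*) = W_1$. Thus $W_1$ is preserved by the $G_k$-action, and since $W_0 = 0$ and $W_2 = H^1_{et}(\bar U, \Q_l)$ are trivially preserved, the entire filtration is $G_k$-stable. As $MG(U)$ is by definition the image of $G_k$ in $GL(H^1_{et}(\bar U, \Q_l))$, it follows that $MG(U)$ preserves the weight filtration.

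The only genuine subtlety — and the step I would be most careful about — is ensuring that the compactification $X$ and the map $\iota$ are genuinely defined over the base field $k$ itself, rather than over some extension, since this is what makes the pullback $G_k$-equivariant for the full Galois group $G_k = \Gal(\bar k / k)$. Over a field of characteristic zero such as our finitely generated $k \supseteq \Q$, Hironaka's resolution of singularities (cited in the excerpt) provides a smooth compactification defined over $k$, so this causes no difficulty; alternatively, one may invoke the fact that the weight filtration is canonically functorial and Galois-equivariant by the theory of weights \cite{deligne-poids}, which gives the same conclusion independently of the choice of $X$. Either route reduces the lemma to the elementary fact that equivariant maps have equivariant images, so there is no substantial obstacle once the equivariance of $\bar\iota^*$ is in hand.
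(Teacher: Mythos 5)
Your proof is correct and is precisely the argument the paper has in mind: its proof of Lemma \ref{weight} is the one-line ``evident from the above description,'' meaning exactly your observation that $W_1=\image[H^1_{et}(\bar X,\Q_l)\to H^1_{et}(\bar U,\Q_l)]$ for a compactification $X$ defined over $k$ (via Hironaka), so $W_1$ is the image of a $G_k$-equivariant map and hence $G_k$-stable, while $W_0=0$ and $W_2$ are trivially stable. Your care about the field of definition of $X$ is the right point to check and is resolved just as the paper implicitly does.
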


\begin{proof}
  This is evident from the above description.
\end{proof}

% $MG(U)$ also acts on $\Hom_{G_k}(\Q_l(1),H^1(U\times_k \C,\Q_l))$ by
% $(g\phi)(r)=g(\phi(r))$ where $g\in MG(U), \phi \in
% \Hom_{G_k}(\Q_l(1),H^1(U\times_k \C,\Q_l))$ and $r\in
% \Q_l(1)$. Since $\phi$ is $G_k$ equivariant
% $g(\phi(r))=\phi(g(r))=\phi(\alpha(g)(r))$ for some rational number
% $\alpha(g)$. Hence $g\phi=\alpha(g)\phi$.  Let $SMG(U)$ be the
% kernel of the group homomorphism from $MG(U) \to \G_m$ which sends
% $g \to \alpha(g)$.

%From now on, let $U$ be either a product of smooth curves or a
%semiabelian variety over $k$.  Fix a smooth compactification $X$ of
%$U$.  
Let $H= H_{et}^1(\bar{U},\Q_l)$ and let $W = W_1H=H_{et}^1(\bar
X,\Q_l)$.  Choose a (not necessarily $G_k$-invariant) complementary
subspace $V$ to $W$ in $H$. $SMG(H)$ being a subgroup of $MG(H)$, by
lemma \ref{weight}, it preserves the weight filtration on $H^1(U)$.
Therefore, with respect to the decomposition $H=V\oplus W$, we can
identify
$$SMG(H)\subseteq 
\{\begin{pmatrix}
  * & 0 \\
  * & *
\end{pmatrix}\}. $$ In particular,
$$\Phi = \ker[SMG(U)\to SMG(H/W\oplus W)]$$
% $\Phi$ the kernel of $SMG(U)\to SMG(H^1(X)\oplus (H/H^1(X)))$ and
% the unipotent radical of $SMG(U)$
is a subgroup of
$$\{\begin{pmatrix}
  I & 0 \\
  f & I
\end{pmatrix} \mid f\in Hom_{\Q_l}(V,W) \}.$$ In other words, $\Phi$
is a subspace of $\Hom_{\Q_l}(V, W)$.

\begin{cor}\label{cor:SMT}
  % As a subgroup of $GL(H) = GL(V\oplus W)$
  The group
    $$ SMG(H)=
    \{\begin{pmatrix}
      I & 0 \\
      f & S
    \end{pmatrix} \mid S\in SMG(W) \text{ and $f\in \Phi $}\}$$ Its
    Zariski closure
$$ \overline{SMG}(H)=
\{\begin{pmatrix}
  I & 0 \\
  f & S
\end{pmatrix} \mid S\in \overline{SMG}(W) \text{ and $f\in \Phi $}\}$$
The group $\Phi$ is the unipotent radical of $\overline{SMG}(H)$.
\end{cor}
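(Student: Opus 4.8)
The plan is to treat this as the exact Galois-theoretic analogue of Lemma \ref{lemma:SMT}, transporting the Hodge argument of \cite{AK} with $SMG$ and $\overline{SMG}$ in place of $SMT$. The excerpt already records that $SMG(H)$ is block lower triangular with respect to $H=V\oplus W$ and that $\Phi\subseteq\Hom_{\Q_l}(V,W)$ is its unipotent part; what remains is to pin down the two diagonal blocks, assemble the group, and then pass to the Zariski closure and identify its unipotent radical.

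First I would show the upper-left block is $I$, i.e. that $SMG(H)$ acts trivially on $V\cong H/W=\mathrm{gr}^W_2H$. Choosing the smooth compactification $X$ with boundary $D$ and using the localization (Gysin) sequence, $H^1_{et}(\bar X,\Q_l)\into H^1_{et}(\bar U,\Q_l)$ and $\mathrm{gr}^W_2H^1_{et}(\bar U,\Q_l)$ embeds into $H^0$ of the normalized boundary twisted by $\Q_l(-1)$, so it is of Tate type: $G_k$ acts through the cyclotomic character times a finite (Artin) part. Hence $EMG(H)\to GL(\mathrm{gr}^W_2H)$ factors, up to that finite part, through the $\G_m$ defining $SMG(H)$, and elements of $SMG(H)$ act trivially on $\mathrm{gr}^W_2H$. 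I expect this to be the main obstacle: unlike the Hodge case, where $\mathrm{gr}^W_2$ is literally $\Q(-1)^{\oplus r}$, here the permutation of boundary components (equivalently, non-split tori) must be controlled, and triviality relies on the split Tate structure of the weight-two piece, obtained after a harmless finite base extension which does not affect the cycle-theoretic conclusions.

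Next I would establish surjectivity of the diagonal map onto $SMG(W)$. Since $W=W_1H=H^1_{et}(\bar X,\Q_l)$ is a $G_k$-stable submodule, restriction gives $MG(H)\onto MG(W)$, and the same applied to $H\oplus\Q_l(1)$ gives a surjection $EMG(H)\onto EMG(W)$ compatible with the projections to $\G_m$; passing to kernels yields $SMG(H)\onto SMG(W)$. Combined with the first step, every element of $SMG(H)$ has the form $\begin{pmatrix} I & 0 \\ f & S\end{pmatrix}$ with $S\in SMG(W)$, and $1\to\Phi\to SMG(H)\to SMG(W)\to 1$ is exact by the definition of $\Phi$. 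A direct conjugation computation shows $\Phi$ is normal and $S\cdot\Phi=\Phi$ for $S\in SMG(W)$, so the fiber over each $S$ is exactly $\{\begin{pmatrix} I & 0\\ f & S\end{pmatrix}\mid f\in\Phi\}$, which is the first displayed identity. In fact the block decomposition realizes an isomorphism of algebraic groups $SMG(H)\cong \Phi\rtimes SMG(W)$, since $(f_1,S_1)(f_2,S_2)=(f_1+S_1f_2,\,S_1S_2)$.

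Finally, for the closure and the unipotent radical, I note that $\Phi$ is a $\Q_l$-linear subspace of $\Hom_{\Q_l}(V,W)$, hence already a Zariski-closed, connected, unipotent subgroup. The morphism $(f,S)\mapsto\begin{pmatrix} I & 0\\ f & S\end{pmatrix}$ is a closed immersion, so taking closures multiplies block by block and gives $\overline{SMG}(H)=\{\begin{pmatrix} I & 0\\ f & S\end{pmatrix}\mid f\in\Phi,\ S\in\overline{SMG}(W)\}\cong\Phi\rtimes\overline{SMG}(W)$. As $\Phi$ is normal, connected and unipotent, it remains only to check that the quotient $\overline{SMG}(W)$ is reductive; this holds because $W=H^1_{et}(\bar X,\Q_l)$ is a semisimple $G_k$-module (Faltings, via the Albanese), so the Zariski closure of its Galois image, and hence $\overline{SMG}(W)$, is reductive. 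Therefore $\Phi=R_u(\overline{SMG}(H))$, which completes the proof.
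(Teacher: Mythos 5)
Your skeleton is exactly the paper's: the paper's entire proof consists of (i) observing $V\cong H/W\cong \Q_l(-1)^N$, so that $SMG(H)$ acts trivially on $V$, which is declared to give the first display; (ii) noting the closure statement follows; and (iii) invoking Faltings' semisimplicity of $W=H_{et}^1(\bar X,\Q_l)$ to make $\overline{MG}(W)$, hence $\overline{SMG}(W)$ (up to isogeny a direct factor of it), reductive, so that $\Phi$ is the unipotent radical. Your surjection $SMG(H)\onto SMG(W)$, block-by-block closure, and Faltings step all agree with this. But your added detail contains one genuine non sequitur, in the fiber claim. Normality of $\Phi$ together with $S\Phi=\Phi$ shows only that the fiber of $SMG(H)\to SMG(W)$ over $S$ is a coset: if $g_0=\begin{pmatrix} I & 0\\ f_0 & S\end{pmatrix}\in SMG(H)$, the fiber is $g_0\Phi$, whose lower-left entries form $f_0+S\Phi=f_0+\Phi$. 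This coset equals $\Phi$ if and only if $f_0\in\Phi$, i.e.\ if and only if the block-diagonal lift $\mathrm{diag}(I,S)$ itself lies in $SMG(H)$ --- which is precisely the content of the first displayed identity, not a consequence of your conjugation computation. A subgroup of $\Phi\rtimes SMG(W)$ surjecting onto $SMG(W)$ with kernel $\Phi$ need not be the full product: a ``graph'' twisted by a nonzero coboundary $S\mapsto f_0\bmod\Phi$ in $\Hom_{\Q_l}(V,W)/\Phi$ has the same kernel and image. So you must either kill this cocycle (for instance by adjusting the chosen complement $V$, or by a cohomological vanishing argument for the reductive group $\overline{SMG}(W)$ --- noting that trivializing a coboundary by conjugation moves the group, so the set equality still needs care) or import the actual argument for the Hodge-case Lemma \ref{lemma:SMT}, which the paper cites from \cite{AK} and does not reprove here.

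Secondarily, your treatment of the upper-left block is more careful than the paper's but proves a slightly different statement. The paper flatly asserts $V\cong H/W\cong\Q_l(-1)^N$ as $G_k$-modules; you correctly observe via the Gysin sequence that in general $\mathrm{gr}^W_2H$ is only $\Q_l(-1)$ tensored with a finite permutation representation on boundary components (equivalently, the character group of a possibly non-split torus), so that an element of $SMG(H)$ acts on $V$ by a finite-order permutation matrix rather than by $I$. However, your proposed fix --- a ``harmless finite base extension'' --- replaces $G_k$ by $G_{k'}$ and hence $SMG(H)$ by a finite-index subgroup, so it establishes the displayed equalities only for the extended field, not for $SMG(H)$ as stated; it is harmless for the cycle-theoretic applications (and the paper makes the same move elsewhere, e.g.\ in Corollary \ref{main3}), but it does not literally yield the corollary over $k$. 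The paper's own proof sidesteps this by building the split Tate structure of $H/W$ into its assertion; you have located the real subtlety, and the honest resolution is to add the rationality/splitness hypothesis (or the finite extension) explicitly to the statement.
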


  \begin{proof}
    The only thing to observe, for the first statement, is that
    $V\cong H/W\cong \Q_l(-1)^N$, so $SMG(H)$ acts trivially on it.
    The second follows from this.  Finally, by a theorem of Faltings
    \cite[Satz 3]{faltings} \cite[p 211]{fw} the action of $G_k$ on
    $W=H_{et}^1(\bar X,\Q_l)$ is semisimple. Therefore the Zariski
    closure of its image $MG(W)$ is reductive. $\overline{SMG}(W)$ is
    also reductive, because up to isogeny it is a direct factor of
    $\overline{MG}(W)$.
  \end{proof}

Since $SMT(H)$ and $\overline{SMG}(H)$ are similar in structure, the
results on invariants proved above for $SMT(H)$ holds for
$\overline{SMG}(H)$ as well.  In particular, we can decompose
$H=V_1\oplus V_2\oplus V_3$ so that $V_3=W$, $\overline{SMG}(H)$
acts trivially on $V_1$ and for every nonzero $u\in V_2$, there
exist an element $g\in \Phi$ such that $g u\ne u$.

As in Hodge case, let
\begin{eqnarray*}
  BT^{n,s}(H) &=&(H^{\otimes n}\otimes \Q_l(s))^{G_k}\otimes \Q_l(-s) \\
   &=&Hom_{MG(H)}(Q_l(s),H^{\otimes n})
\end{eqnarray*}

Let $G_c=\overline{SMG}(V_3)$. Define the space of elementary Tate-cycles 
$ET^{n,s}(H)\subseteq H^{\otimes n}$ as the subspace 
\begin{equation}
  ET^{n,s}(H)=\bigoplus_{j_1,\ldots,j_n\in \{1,3\}, |j_1\ldots j_n|_3=2n-2s} 
  V(j_1\ldots j_n)^{G_c}
\end{equation}
where the action of $G_c$ on $V_1$ is trivial. Again $G_c$ can
be viewed as a quotient of $SMG(H)$ since $W$ is $G_k$-submodule of
$H$.

\begin{thm} \label{BT:main} $BT^{n,s}(H)=ET^{n,s}(H)$
\end{thm}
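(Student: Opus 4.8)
The plan is to repeat, almost verbatim, the argument that proves Theorem~\ref{BH:main}, replacing the Mumford--Tate group $SMT(H)$ everywhere by the Zariski closure $\overline{SMG}(H)$ of the Galois image. This is legitimate because Corollary~\ref{cor:SMT} endows $\overline{SMG}(H)$ with exactly the block structure that Lemma~\ref{lemma:SMT} supplies for $SMT(H)$, and because the decomposition $H=V_1\oplus V_2\oplus V_3$, the weight computation of Lemma~\ref{lemma:weights}, and the unipotent-radical statement of Lemma~\ref{lemma:Ureduction} all transfer to $\overline{SMG}(H)$ with $G_c=\overline{SMG}(V_3)$, as observed just before the statement. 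One then establishes the two inclusions $ET^{n,s}(H)\subseteq BT^{n,s}(H)$ and $BT^{n,s}(H)\subseteq ET^{n,s}(H)$ separately.

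For the inclusion $ET^{n,s}(H)\subseteq BT^{n,s}(H)$ I would argue as in the Hodge case. Since $\overline{SMG}(H)$ acts trivially on $V_1$ and acts on $V_3$ through $G_c$, it fixes every generator of $ET^{n,s}(H)$, and hence so does $SMG(H)$; by construction each generator is pure of weight $2s$ by Lemma~\ref{lemma:weights}. The key point is that an $SMG(H)$-fixed element of pure weight $2s$ is automatically a Tate cycle: the Galois action on it factors through $EMG(H)$, the subgroup $SMG(H)$ acts trivially, so the action factors through the quotient $EMG(H)\to\G_m$, on which purity of weight $2s$ pins the resulting character down to the $(-s)$-th power of the cyclotomic character. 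Thus the element lies in $(H^{\otimes n}\otimes\Q_l(s))^{G_k}\otimes\Q_l(-s)=BT^{n,s}(H)$.

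For the reverse inclusion I would take $\tau\in BT^{n,s}(H)$, subtract off its elementary part $\tau^{ET}$, and show the remainder vanishes. Decomposing $\tau=\sum\tau_{j_1\ldots j_n}$ via \eqref{eq:decompHn}, the containment $\tau\in W_{2s}H^{\otimes n}$ together with Lemma~\ref{lemma:weights} gives $|k_1\ldots k_n|_3\ge 2n-2s$ on every nonzero term, while nonvanishing in weight $2s$ produces a leading term $\tau'$ with $|j_1\ldots j_n|_3=2n-2s$. If some index of $\tau'$ equals $2$, the analogue of Lemma~\ref{lemma:Ureduction} supplies a unipotent $g\in\Phi\subseteq\overline{SMG}(H)$ for which $(g-I)\tau'$ has a nonzero component in a new index string (one more $3$, one fewer $2$) that no other term can hit, contradicting invariance; hence $\tau'$ has type $3^{2n-2s}1^{2s-n}$. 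If $\tau'\notin ET^{n,s}(H)$, some $h\in G_c$ moves it, and lifting $h$ to $\overline{SMG}(H)$ yields $(g-I)\tau'\neq 0$ inside $V(3^{2n-2s}1^{2s-n})$; invariance then forces a second term $\tau''$ with $|k_1\ldots k_n|_3<2n-2s$, again a contradiction. Therefore $\tau'\in ET^{n,s}(H)$, which forces $\tau=0$.

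The main obstacle, and the only genuine departure from the Hodge proof, is that $\tau$ is a priori invariant only under the profinite image $SMG(H)$, whereas every step above deploys elements of the algebraic group $\overline{SMG}(H)$---the unipotent $g\in\Phi$ and the lifts of $G_c$. I would bridge this by the elementary observation that the fixed locus of an algebraic representation is Zariski closed, so $SMG(H)$-invariance of $\tau$ already implies $\overline{SMG}(H)$-invariance, and the lemmas apply directly. The reductivity of $\overline{SMG}(W)$, established in Corollary~\ref{cor:SMT} via Faltings' semisimplicity, is what guarantees that the $G_c$-step produces genuine moving elements and that $G_c$ may legitimately be realized as a quotient of $SMG(H)$.
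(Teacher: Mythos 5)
Your proposal is correct and takes essentially the same route as the paper, which simply states that the proof of Theorem \ref{BT:main} ``is same as the Hodge case,'' relying on Corollary \ref{cor:SMT} and the decomposition $H=V_1\oplus V_2\oplus V_3$ to transfer the argument of Theorem \ref{BH:main} to $\overline{SMG}(H)$. Your two bridging observations---that $SMG(H)$-invariance already gives $\overline{SMG}(H)$-invariance because fixed loci of algebraic representations are Zariski closed, and that purity of weight $2s$ pins the residual $\G_m$-character to the $(-s)$-th power of the cyclotomic character---are precisely the details the paper leaves implicit, and both are sound.
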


The proof is same as the Hodge case.

\section{Beilinson's Hodge and Tate cycles on product of curves and semiabelian
  varieties}

For a variety $U$ over a field $k$, let $\bar{U}=U\times_k \bar{k}$. From the 
first section, we get maps
$$
\begin{cases}
  CH^i(U,j)\to Hom_{MHS}(\Z(0), H^{2i-j}(U,\Z(i))) &\text{if $U$ is over } \C\\
  CH^i(U,j)\otimes \Z_l\to H_{et}^{2i-j}(\bar{U},\Z_l(i))^{G_k} & \text{in general}
\end{cases}
$$

The first map is easily seen to be surjective for $i=j=1$ (see
\cite[Thm 5.13]{jannsen} or \cite[Thm 1.1]{AK}), and therefore the
Beilinson-Hodge conjecture holds integrally at this level. For a 
finitely generated field $k$, the same is true for Beilinson-Tate:

\begin{thm}(Jannsen, \cite[Thm 5.15]{jannsen})\label{11case} For any
  smooth variety $U$ over a finitely generated field $k$, the map
  $$reg: CH^1(U,1)\otimes \Z_l \to H^1(U,\Z_l(1))^{G_k}$$
  is surjective.
\end{thm}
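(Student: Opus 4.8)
The plan is to exploit the identification $CH^1(U,1)=\OO(U)^*$ together with the Kummer-theoretic description of $reg$ recorded in the \'etale version subsection, and then to compute the target $H^1_{et}(\bar U,\Z_l(1))^{G_k}$ by hand. Fix a smooth compactification $X\supseteq U$ with boundary $D=X\setminus U$, and let $\bar D_1,\dots,\bar D_r$ be the irreducible components of $\bar D$. Since $\bar X$ is smooth, proper and (we may assume) connected, a unit on $\bar U$ whose divisor on $\bar X$ is trivial must be constant; hence the divisor map identifies $L:=\OO(\bar U)^*/\bar k^*$ with a subgroup of $\bigoplus_i\Z\,\bar D_i$, so $L$ is a free $G_k$-module of finite rank on which $G_k$ acts through the finite quotient permuting the $\bar D_i$. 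These fit into an exact sequence of discrete $G_k$-modules $1\to\bar k^*\to\OO(\bar U)^*\to L\to 0$.

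First I would compute the target. For each $N$ the Kummer sequence gives $0\to\OO(\bar U)^*/l^N\to H^1_{et}(\bar U,\mu_{l^N})\to\Pic(\bar U)[l^N]\to 0$, and since $\bar k^*$ is divisible we have $\OO(\bar U)^*/l^N=L/l^N$. Passing to the inverse limit over $N$, the $\varprojlim^1$-terms vanishing by finiteness, yields the exact sequence of $G_k$-modules
$$0\to L\otimes\Z_l\to H^1_{et}(\bar U,\Z_l(1))\to T_l\,\Pic(\bar U)\to 0.$$
The crucial point is that the $G_k$-invariants of the last term vanish: $T_l\,\Pic(\bar U)\otimes\Q_l$ is, up to the finitely generated contribution of the boundary classes, a subquotient of $T_l\,\Pic^0(\bar X)\otimes\Q_l\cong H^1_{et}(\bar X,\Q_l(1))$, which by purity \cite{deligne-poids} is pure of weight $-1$; a nonzero invariant would be fixed by all geometric Frobenii $F_m$ and so of weight $0$, a contradiction. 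As $T_l\,\Pic(\bar U)$ is torsion free this forces $(T_l\,\Pic(\bar U))^{G_k}=0$, so taking invariants in the displayed sequence gives $H^1_{et}(\bar U,\Z_l(1))^{G_k}=(L\otimes\Z_l)^{G_k}=L^{G_k}\otimes\Z_l$, the last identity holding because $L$ is finitely generated with $G_k$ acting through a finite group.

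It remains to show $reg$ hits all of $L^{G_k}\otimes\Z_l$. By construction $reg$ is induced by the Kummer boundary applied to $\OO(U)^*\subseteq\OO(\bar U)^*$, so its image lies in the subgroup $L\otimes\Z_l$, and the induced map $\OO(U)^*\otimes\Z_l\to L^{G_k}\otimes\Z_l$ is obtained from $\OO(U)^*\to L^{G_k}$ by tensoring with $\Z_l$ (the divisible factor $\bar k^*$ contributing nothing). Thus, by right-exactness of $-\otimes\Z_l$, surjectivity of $reg$ follows once $\OO(U)^*\to L^{G_k}$ is surjective. This is exactly Hilbert's Theorem~90: the $G_k$-cohomology sequence of $1\to\bar k^*\to\OO(\bar U)^*\to L\to 0$ reads $\OO(U)^*\to L^{G_k}\to H^1(G_k,\bar k^*)$ with $H^1(G_k,\bar k^*)=0$, so $\OO(U)^*\to L^{G_k}$ is onto. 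Tensoring with $\Z_l$ completes the argument.

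The one nonformal ingredient, and hence the step I expect to be the main obstacle, is the vanishing $(T_l\,\Pic(\bar U))^{G_k}=0$. It rests on the purity/weight statement for $H^1$ of the smooth proper model $\bar X$ over the finitely generated base (obtained by spreading out and reducing modulo a prime, in the formalism recalled before Lemma~\ref{weight}), together with a little bookkeeping to pass from $\Pic^0(\bar X)$ to $\Pic(\bar U)$ so that the finitely generated group of boundary classes does not affect the weight rationally. Everything else reduces to the Kummer sequence, Hilbert 90, and flatness of $\Z_l$.
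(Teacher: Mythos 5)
You have correctly reconstructed most of the skeleton of the argument (the paper itself gives no proof, quoting the statement from Jannsen, so the comparison is against the argument that is actually needed): the identification of $reg$ with the Kummer boundary on $\OO(U)^*$, the exact sequence $0\to L\otimes\Z_l\to H^1_{et}(\bar U,\Z_l(1))\to T_l\Pic(\bar U)\to 0$, the Hilbert~90 step producing preimages in $\OO(U)^*$ of $L^{G_k}$, and the flatness bookkeeping $(L\otimes\Z_l)^{G_k}=L^{G_k}\otimes\Z_l$ are all sound. The genuine gap is exactly at the step you yourself flag as the main obstacle: the vanishing $(T_l\Pic(\bar U))^{G_k}=0$ does \emph{not} follow from purity, and your claim that the boundary classes contribute only a ``finitely generated, hence rationally negligible'' piece to the Tate module is false. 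Write $B\subseteq\Pic(\bar X)$ for the subgroup generated by the boundary components and $B_0=B\cap\Pic^0(\bar X)$. It is true that $T_lB=0$, but the Tate module of a quotient of a divisible group by a finitely generated group is \emph{larger} than that of the subgroup: up to the (harmless) $NS$-part one gets an exact sequence $0\to T_l\Pic^0(\bar X)\to T_l\Pic(\bar U)\to B_0\otimes\Z_l\to 0$, whose new quotient is built from towers of $l^n$-division points of the boundary classes in $\Pic^0(\bar X)$ --- the classical $1$-motive/Tate-curve phenomenon. That quotient $B_0\otimes\Z_l$ has weight $0$ (the Galois action on $B_0$ factors through a finite permutation group), so weight $-1$ purity of the subobject $H^1_{et}(\bar X,\Q_l(1))$ says nothing about invariants of the extension.

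Concretely, take $U=E\setminus\{O,P\}$ with $E$ an elliptic curve over $k$ and $P\in E(k)$ non-torsion. Then $L=0$ and $H^1_{et}(\bar U,\Z_l(1))$ is an extension $0\to T_lE\to H^1_{et}(\bar U,\Z_l(1))\to\Z_l\to 0$ whose class is the Kummer class $\kappa(P)\in H^1(G_k,T_lE)$. Invariants of such an extension vanish if and only if $\kappa(P)$ is non-torsion, and proving this requires the Mordell--Weil/N\'eron--Lang theorem ($A(k)$ finitely generated for $k$ finitely generated) together with the resulting injectivity of $A(k)\otimes\Z_l\to H^1(G_k,T_lA)$; weights alone cannot rule out a torsion Kummer class, and indeed this is precisely where the hypothesis that $k$ is finitely generated enters Jannsen's actual proof (over larger fields the needed injectivity, and the theorem's conclusion, can fail). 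If the class were torsion you would obtain a $G_k$-invariant class in $H^1_{et}(\bar U,\Z_l(1))$ not in the image of $\OO(U)^*\otimes\Z_l$, so your proof as written proves too much. Once you supply this Kummer-theoretic/Mordell--Weil ingredient to kill the invariants of the weight-$0$ quotient (the weight argument still correctly disposes of $(T_l\Pic^0(\bar X))^{G_k}$, and torsion in $B_0$ causes no trouble since $T_l\Pic(\bar U)$ is torsion-free), the remainder of your argument closes as stated.
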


In \cite{AK}, we defined
$$BH^q(U) = Hom_{MHS}(\Q(0), H^q(U,\Q(q)))$$ 
Similarly, we define the space of Beilinson-Tate cycles
$$BT^q(U) = H_{et}^{q}(\bar{U},\Q_l(q))^{G_k}$$
More generally, let
$$BH^{n,s}(U)=Hom_{MHS}(\Q(0), H^n(U,\Q(s)))$$
$$BT^{n,s}(U)=H_{et}^n(\bar{U},\Q_l(s))^{G_k}$$

The Beilinson-Hodge (respectively Beilinson-Tate) conjecture asserts
that the regulator maps from $CH^q(U,q)$ (respectively 
$CH^q(U,q)\otimes \Q_l$ ) surjects onto
$BH^q(U)$ (respectively $BT^q(U)$). As in the Beilinson-Hodge case,
note that the conjecture is only interesting for open varieties,
because it is vacuously true if the variety is projective.  In case of smooth projective 
varieties $BT^q(U)=0$ because $H_{et}^q(\bar{U},\Q_l)$ is pure of weight $2q$.

\begin{lemma}\label{lemma:key}
  If the products $BT^1(U)\times \ldots \times BT^1(U)\to BT^q(U)$ are
  surjective for all $q$, then the Beilinson-Hodge conjecture holds
  for $U$.
\end{lemma}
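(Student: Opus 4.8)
The plan is to bootstrap from the weight-one case, which is exactly Jannsen's theorem (Theorem~\ref{11case}), up to all weights by exploiting the multiplicativity of the regulator. Concretely, I want to show that the regulator $reg: CH^q(U,q)\otimes\Q_l\to BT^q(U)$ is surjective, which is the assertion of the Beilinson-Tate conjecture for $U$.

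First I would record the base case in rational form. Jannsen's theorem gives surjectivity of $CH^1(U,1)\otimes\Z_l\to H^1_{et}(\bar U,\Z_l(1))^{G_k}$; tensoring with $\Q_l$ identifies the target with $BT^1(U)$ and shows that $reg: CH^1(U,1)\otimes\Q_l\to BT^1(U)$ is surjective. Next I would set up the commutative square in which the iterated external-product-followed-by-diagonal map $CH^1(U,1)\times\cdots\times CH^1(U,1)\to CH^q(U,q)$ sits over the $q$-fold cup product $BT^1(U)\times\cdots\times BT^1(U)\to BT^q(U)$, the vertical arrows being the appropriate powers of $reg$ and $reg$ itself. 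The commutativity of this square is precisely the compatibility of $reg$ with cup products established in Section~\ref{cycle-map} (external products followed by restriction along the diagonal $\Delta:U\to U\times U$), transported to the \'etale setting via the functorial properties noted there.

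With this diagram in hand the argument is formal. Given $\alpha\in BT^q(U)$, the hypothesis that the product $BT^1(U)\times\cdots\times BT^1(U)\to BT^q(U)$ is surjective lets me write $\alpha=\sum_i \beta_{i,1}\cup\cdots\cup\beta_{i,q}$ with each $\beta_{i,j}\in BT^1(U)$. By the base case each $\beta_{i,j}=reg(\gamma_{i,j})$ for some $\gamma_{i,j}\in CH^1(U,1)\otimes\Q_l$, and multiplicativity of $reg$ then gives $\alpha=reg\bigl(\sum_i \gamma_{i,1}\cup\cdots\cup\gamma_{i,q}\bigr)$, exhibiting $\alpha$ in the image of $reg$ from $CH^q(U,q)\otimes\Q_l$. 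This is exactly the required surjectivity.

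The only point requiring genuine care, and the step I would expect to be the main obstacle, is the compatibility of the regulator with products after passing to $\Q_l$-coefficients and through the inverse limit over $N$ used to define the $l$-adic regulator, since the product structure was written down most explicitly at the level of cycles and of singular (resp.\ \'etale) cohomology. I would verify that the cup-product compatibility of Section~\ref{cycle-map} is preserved under the limit defining $reg: CH^q(U,q)\otimes\Q_l\to H^{q}_{et}(\bar U,\Q_l(q))^{G_k}$, so that the square above genuinely commutes with the correct Tate twists $\Q_l(1)^{\otimes q}\cong\Q_l(q)$; once that is confirmed, everything else is a diagram chase.
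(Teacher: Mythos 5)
Your proposal is correct and takes essentially the same approach as the paper: the paper's proof consists precisely of the commutative square relating $CH^1(U,1)\times\cdots\times CH^1(U,1)\to CH^q(U,q)$ to $BT^1(U)\times\cdots\times BT^1(U)\to BT^q(U)$ together with Jannsen's Theorem~\ref{11case}, which is exactly your diagram chase (and you rightly read the conclusion as the Beilinson-Tate conjecture, correcting the statement's ``Beilinson-Hodge'' slip). The product-compatibility of the $l$-adic regulator that you flag as the delicate point is the ingredient the paper disposes of earlier, in Section~\ref{cycle-map}, where it asserts that the functorial and multiplicative properties carry over from the singular to the \'etale setting.
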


\begin{proof}
  This follows from the following commutative diagram and theorem
  \ref{11case}
  $$
  \xymatrix{
    CH^1(U,1)\times \ldots \times CH^1(U,1)\ar[r]\ar[d] & CH^q(U,q)\ar[d] \\
    BT^1(U)\times \ldots \times BT^1(U)\ar[r] & BT^q(U) }
  $$  
\end{proof}

\begin{cor}
  The Beilinson-Tate conjecture holds for a product of smooth curves
  over a finitely generated field of characteristic $0$.
\end{cor}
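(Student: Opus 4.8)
The plan is to deduce the statement from Lemma~\ref{lemma:key}: writing $U=C_1\times\cdots\times C_m$, it suffices to prove that the cup-product maps $BT^1(U)\times\cdots\times BT^1(U)\to BT^q(U)$ are surjective for every $q$. Indeed, combining such surjectivity with Jannsen's Theorem~\ref{11case} (which makes $reg:CH^1(U,1)\otimes\Q_l\to BT^1(U)$ onto) and the multiplicativity of $reg$ established in Section~\ref{cycle-map} shows that the image of $CH^q(U,q)\otimes\Q_l$ already contains all products of $BT^1(U)$-classes, hence all of $BT^q(U)$. Thus everything reduces to the surjectivity of these product maps, and the purpose of the invariant theory of the previous section is precisely to supply it.

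First I would set $H=H^1_{et}(\bar U,\Q_l)$ and use the Künneth formula to write $H=\bigoplus_i H^1_{et}(\bar C_i,\Q_l)$ and to decompose $H^q_{et}(\bar U,\Q_l)$ into Künneth components $\bigotimes_i H^{a_i}_{et}(\bar C_i,\Q_l)$ with $\sum_i a_i=q$. A weight argument then locates $BT^q(U)$: a $G_k$-invariant of $H^q_{et}(\bar U,\Q_l(q))$ has weight $0$, so by Deligne's purity \cite{deligne-poids} each nonzero Künneth component of the untwisted class must reach weight $2q$. Since $H^{a_i}_{et}(\bar C_i,\Q_l)$ has weight at most $2$ when $a_i=1$ and weight exactly $2$ when $a_i=2$, a short count shows that no component containing an $H^2$-factor can carry weight $2q$. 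Hence $BT^q(U)$ lies entirely in the part $\bigoplus_{|S|=q}\bigotimes_{i\in S}H^1_{et}(\bar C_i,\Q_l)$ generated by the first cohomologies, and on this part the increasing-order tensor provides a $G_k$-equivariant section of the cup product $H^{\otimes q}\to H^q_{et}(\bar U,\Q_l)$.

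Through this section I would identify $BT^q(U)$ with a summand of the Tate-cycle space $BT^{q,q}(H)$ and apply Theorem~\ref{BT:main}. With the decomposition $H=V_1\oplus V_2\oplus V_3$ of Corollary~\ref{cor:SMT}, in which $V_3=W_1H$ is pure of weight $1$ and $V_1$ is the $\overline{SMG}(H)$-invariant weight-$2$ part, the constraint $n=s=q$ forces $|j_1\ldots j_n|_3=0$, so the theorem gives $BT^{q,q}(H)=ET^{q,q}(H)=V_1^{\otimes q}$. Because the splitting $H=\bigoplus_i H^1_{et}(\bar C_i,\Q_l)$ is $G_k$-equivariant, $V_1=BT^1(U)=\bigoplus_i BT^1(C_i)$ (this is also the $n=s=1$ instance of the theorem), and the real content is that there are no ``mixed'' Tate classes: term by term, the invariants of each $\bigotimes_{i\in S}H^1_{et}(\bar C_i,\Q_l)(q)$ are exactly $\bigotimes_{i\in S}BT^1(C_i)$. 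Matching Künneth summands thus yields $BT^q(U)=\bigoplus_{|S|=q}\bigotimes_{i\in S}BT^1(C_i)$. Finally, cupping classes of $BT^1(U)$ kills every term with two entries from the same curve---their product lands in $H^2_{et}(\bar C_i,\Q_l)$, which vanishes for an affine curve, the only kind carrying nonzero $BT^1$---while distinct-index products realize the summands above, so the product maps are surjective and Lemma~\ref{lemma:key} applies.

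The main obstacle is ruling out these mixed Tate classes, i.e. the existence of $G_k$-invariants in $\bigotimes_{i\in S}H^1_{et}(\bar C_i,\Q_l)(q)$ beyond the evident products; this is exactly the equality $BT^{q,q}(H)=ET^{q,q}(H)$ of Theorem~\ref{BT:main}. Its proof rests on the structural description of $\overline{SMG}(H)$ in Corollary~\ref{cor:SMT}: Faltings' semisimplicity forces $\overline{SMG}(W)$ to be reductive, whence $\Phi$ is the unipotent radical of $\overline{SMG}(H)$, and this unipotent part is then exploited, exactly as in the proof of Theorem~\ref{BH:main}, to eliminate any invariant component involving $V_2$. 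The remaining ingredients---the concentration of $BT^q(U)$ in top weight and the vanishing of same-curve cup products---are routine consequences of Deligne's purity and the cohomology of open curves.
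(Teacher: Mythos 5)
Your proposal is correct and follows essentially the same route as the paper: reduce via Lemma~\ref{lemma:key} and Jannsen's Theorem~\ref{11case} to the surjectivity of the product maps, and obtain $BT^q(U)=BT^{q,q}(H)\otimes\Q_l(q)=ET^{q,q}(H)\otimes\Q_l(q)=BT^1(U)^{\otimes q}$ from the K\"unneth formula and Theorem~\ref{BT:main}. The extra steps you supply --- the purity/weight count excluding K\"unneth components with an $H^2$-factor, and the vanishing of same-curve cup products --- are exactly the details the paper compresses into its one-line chain of equalities, so they are a welcome elaboration rather than a different argument.
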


\begin{proof}
  Let $U = \prod U_i$, where $U_i$ are smooth curves. Let $H=
  H_{et}^1(U)$ and note that by  K\"unneth's formula and the theorem
  \ref{BT:main}
$$BT^q(U)=BT^{q,q}(U)=BT^{q,q}(H)\otimes \Q_l(q)=ET^{q,q}(H)\otimes\Q_l(q) =
  BT^1(U)^{\otimes q}.$$  
 So the hypothesis of lemma~\ref{lemma:key} holds.
\end{proof}

\begin{cor}\label{cor:BT}
  The Beilinson-Tate conjecture holds for a semiabelian variety
 over a finitely generated field of characteristic $0$.
\end{cor}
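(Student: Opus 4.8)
The plan is to run the argument of the previous corollary, replacing the K\"unneth decomposition (available for a product of curves) by the exterior-algebra structure of the cohomology of a semiabelian variety. Write $H=H^1_{et}(\bar U,\Q_l)$. For any variety the cup product map $\bigwedge^q H\to H^q_{et}(\bar U,\Q_l)$ is $G_k$-equivariant; the one non-formal ingredient I need is that for semiabelian $U$ it is an \emph{isomorphism}. I would record this first. It is standard: the group law $m\colon U\times U\to U$, being defined over $k$, makes $H^*_{et}(\bar U,\Q_l)$ into a connected graded-commutative Hopf algebra, and in characteristic zero a finite-dimensional such algebra is exterior on its primitives, which for a semiabelian variety lie in degree one. (Concretely, after an embedding $k\hookrightarrow\C$ the space $U(\C)$ is homotopy equivalent to a real torus, so its cohomology is generated in degree one.) One may also simply cite \cite{AK}, where this structure underlies the Hodge computation.

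Granting the isomorphism, I would observe that $\bigwedge^q H$ is a $G_k$-equivariant direct summand of $H^{\otimes q}$: antisymmetrization $\bigwedge^q H\to H^{\otimes q}$ and the cup-product projection $H^{\otimes q}\to\bigwedge^q H$ are both $G_k$-equivariant and compose to multiplication by $q!$, a unit in $\Q_l$. Applying the additive functor $(-\otimes\Q_l(q))^{G_k}$ to this retraction, $BT^q(U)=(\bigwedge^q H\otimes\Q_l(q))^{G_k}$ becomes a retract of $BT^{q,q}(H)=(H^{\otimes q}\otimes\Q_l(q))^{G_k}$, the retracting map being cup product; hence cup product surjects $BT^{q,q}(H)$ onto $BT^q(U)$.

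Finally I would invoke Theorem \ref{BT:main}, which gives $BT^{q,q}(H)=ET^{q,q}(H)$. With $n=s=q$ the defining condition reads $|j_1\ldots j_q|_3=2q-2q=0$, so only the string $1^q$ survives and $ET^{q,q}(H)=V_1^{\otimes q}$; its elements are tensor products of elements of $V_1=BT^{1,1}(H)$, each of which, after the Tate twist, is a class in $BT^1(U)=H^1_{et}(\bar U,\Q_l(1))^{G_k}$, and the retraction above sends such a tensor to the corresponding cup product in $BT^q(U)$. Therefore the product map $BT^1(U)\times\cdots\times BT^1(U)\to BT^q(U)$ is surjective for every $q$, the hypothesis of Lemma \ref{lemma:key} is met, and the Beilinson-Tate conjecture holds for $U$. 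The main obstacle is really only the first step: one must ensure the exterior-algebra identification is genuinely $G_k$-equivariant. I expect no difficulty, even when the torus part is non-split and $G_k$ acts through a finite quotient on the character lattice, since the whole argument uses only the $G_k$-equivariance of cup product together with Theorem \ref{BT:main}, and never the triviality of the Galois action on $H/W_1H$.
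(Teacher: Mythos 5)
Your proposal is correct and takes essentially the same route as the paper's proof: identify $H^*_{et}(\bar U,\Q_l)$ with $\wedge^* H$ as a $G_k$-equivariant direct summand of the tensor algebra, use Theorem \ref{BT:main} (with $n=s=q$, so that only the string $1^q$ survives and $BT^{q,q}(H)=V_1^{\otimes q}$) to see that invariant classes are products of degree-one Tate classes, and conclude via Lemma \ref{lemma:key}. You merely make explicit the details the paper leaves implicit, namely the Hopf-algebra/topological justification that $H^*(\bar U)=\wedge^* H$, the antisymmetrization retraction using that $q!$ is invertible in $\Q_l$, and the final appeal to Lemma \ref{lemma:key}.
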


\begin{proof}
  Let $U$ be a semiabelian variety. Let $H = H_{et}^1(U)$. By the
  theorem \ref{BT:main}, we have that $BT^n(H) = BT^1(H)^{\otimes
    n}$. Now observe that $H^*(U) = \wedge^* H$ which is a direct
  summand of the tensor algebra. So the Beilinson-Tate cycles on
  $H^n(U)$ are given by products of Beilinson-Tate cycles on $H$.
\end{proof}

\subsection{Strong Beilinson's Hodge and Tate conjecture}

$k$ will stand for either $\C$ or a finitely generated field of
characteristic $0$.

\begin{prop}\label{strong}
Let $C_1,C_2,\ldots, C_n$ be smooth curves defined over $k$.
Let $U=C_1\times C_2\ldots C_n$ 
and let $X$  $X=\bar{C}_1\times\bar{C}_2\ldots \bar{C}_n$ where $\bar{C_i}$ denote 
the smooth compactification of $C_i$. 
  \begin{enumerate}
  \item If $k=\C$ and the Hodge conjecture is true
    for $X$ then $BH^{n,s}(U)$ is generated by algebraic cycles on
    $U$.
  \item If $k$ is finitely generated and the Tate conjecture is true for $X$ then
    $BT^{n,s}(U)$ is generated by algebraic cycles on $U$.
  \end{enumerate}
\end{prop}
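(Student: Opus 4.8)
The plan is to combine the invariant-theoretic description of Beilinson--Hodge (resp.\ Beilinson--Tate) cycles from Theorem \ref{BH:main} (resp.\ Theorem \ref{BT:main}) with the K\"unneth decomposition of $H^*(U)$, and then to realize each elementary generator by an algebraic cycle. First I would set $H=H^1(U)$ (resp.\ $H^1_{et}(\bar U,\Q_l)$) and use $H^*(U)=\bigotimes_i H^*(C_i)$ together with the weight filtration to reduce $BH^{n,s}(U)$ (resp.\ $BT^{n,s}(U)$) to a sum over K\"unneth components. By Theorem \ref{BH:main} every such class is elementary, i.e.\ a sum of products of factors drawn from the weight-$2$ invariant space $V_1$ and the $G_c$-invariant part of the weight-$1$ space $V_3=W_1H$. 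The crucial structural observation is that within any nonzero K\"unneth component each curve $C_i$ contributes at most one weight-$1$ factor: two weight-$1$ factors from the same curve would cup into $H^2(C_i)$, which vanishes for an open factor and is a Tate class $\Q(-1)$ for a proper factor. Consequently the weight-$1$ part of every generator already lives in $H^*(X)=\bigotimes_i H^*(\bar C_i)$, so that no self-power of $X$ is needed.

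Next I would treat the two kinds of generators separately. The weight-$2$ factors lie in the image of the $(1,1)$ regulator: by Theorem \ref{11case} (and its Hodge analogue) the map $reg\colon CH^1(U,1)=\OO(U)^*\to H^1(U,\Z(1))$ surjects onto the weight-$2$ Hodge (resp.\ Tate) classes in $H^1(U)$, so these factors are generated by algebraic cycles on $U$ (boundary and puncture divisors, together with the $\Q(-1)$ coming from $H^2(\bar C_i)$ on the proper factors). The weight-$1$ factors are, by the previous paragraph, Hodge (resp.\ Tate) classes lying in $H^*(X)$; here I would invoke the assumed Hodge conjecture (resp.\ Tate conjecture) for $X$ to produce algebraic cycles on $X$ realizing them, and then restrict these cycles along the open immersion $U\hookrightarrow X$, using that flat pullback on Chow groups is compatible with the restriction $H^*(X)\to H^*(U)$ onto the weight-$1$ part $W_1H^*(U)$.

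Finally I would assemble the generators by cup products. Since the regulator is compatible with products (Section \ref{cycle-map}), a cup product of classes in the image of $reg$ is again in the image of $reg$; hence each elementary generator, and therefore all of $BH^{n,s}(U)$ (resp.\ $BT^{n,s}(U)$), is generated by algebraic cycles on $U$. I expect the main obstacle to be the bookkeeping in the first step: one must match the abstract description $EH^{n,s}(H)$ (resp.\ $ET^{n,s}(H)$), which lives in the free tensor power $H^{\otimes n}$, with the genuine K\"unneth decomposition of $H^n(U)$, carefully tracking how pairs of weight-$1$ tensors supported on a common curve collapse under the cup product. This is precisely what guarantees that the surviving weight-$1$ generators are honest classes on $X$, to which the Hodge (resp.\ Tate) conjecture applies directly; once this is in place the remainder is a formal consequence of the multiplicativity of the regulator.
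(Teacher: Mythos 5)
Your proposal is correct and takes essentially the same route as the paper's proof: reduce via Theorem \ref{BH:main} (resp.\ Theorem \ref{BT:main}) to the elementary summands $V(3^{2n-2s}\,1^{2s-n})^{G_c}$, realize the weight-one tensor part by algebraic cycles coming from $X$ via the Hodge (resp.\ Tate) conjecture, realize the $V_1$-part through the $(1,1)$ surjectivity of the regulator, and assemble using multiplicativity of $reg$. The only cosmetic differences are that you justify lifting the weight-one part to $X$ by K\"unneth bookkeeping where the paper invokes Deligne's surjection $H^{2n-2s}(X)\onto W_{2n-2s}H^{2n-2s}(U)$, and that you rederive the weight-two step factor-by-factor from Theorem \ref{11case} where the paper simply cites the surjection $CH^{2s-n}(U,2s-n)\onto BH^{2s-n,2s-n}(U)$ from \cite{AK} --- both amount to the same argument.
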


\begin{proof}
  Let $H=H^1(U,\Q)$. By the K\"unneth formula, $H^*(U)$ can be written as a
  sum of tensor products of powers of $H$ with spaces generated by
  cycles. 
  By  theorem \ref{BH:main}, it is enough to prove that every cycle 
  in $EH^{n,s}(H)$ is algebraic. Moreover, it suffices to show that every 
  summand in \eqref{eq:decompBHtriv} is algebraic. Without loss of generality,
  we shall show that $V(3^{2n-2s}\,1^{2s-n})^{G_c}$ is algebraic.

  The Hodge conjecture for $X$ implies the surjectivity of the map
  $CH^{n-s}(X) \to BH^{2n-2s,n-s}(X)$.  We know that $H^{2n-2s}(X)$
  surjects onto $W_{2n-2s}H^{2n-2s}(U)$ \cite{deligne-hodge}, and this
  induces the surjection $BH^{2n-2s,n-s}(X)\onto
  BH^{2n-2s,n-s}(U)$. Hence we get the surjection $CH^{n-s}(U)\onto
  BH^{2n-2s,n-s}(U)=V(3^{2n-2s})^{G_c}\otimes \Q(n-s)$.

  By \cite{AK}, we have the surjection $$CH^{2s-n}(U,2s-n)\onto
  BH^{2s-n,2s-n}(U)=V(1^{2s-n})\otimes\Q(2s-n).$$

  Since regulators preserve products, we have the surjection from
  $$CH^s(U,2s-n)\onto V(3^{2n-2s})^{G_c}\otimes
  V(1^{2s-n})\otimes\Q(s)=V(3^{2n-2s}\,1^{2s-n})^{G_c}\otimes\Q(s)$$
  In view of theorem \ref{BT:main}, the proof of (2) is similar.
\end{proof}

The above proposition can be reduced to verifying the Hodge and the Tate conjecture for a
certain abelian variety. 

\begin{cor}\label{jacobian}
  The conclusion of the proposition is true if the Hodge (respectively Tate) conjecture holds
  for the product of Jacobians $J=J(\bar{C}_1)\times \ldots \times J(\bar{C}_n)$
\end{cor}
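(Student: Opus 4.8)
The plan is to run the proof of the proposition verbatim, but to feed in the Hodge (resp.\ Tate) conjecture for the Albanese variety $J=\prod_i J(\bar C_i)$ rather than for $X$. Write $\phi\colon X\to J$ for the Albanese morphism, the product of the Abel--Jacobi maps $\bar C_i\to J(\bar C_i)$; it induces an isomorphism $\phi^*\colon H^1(J)\xrightarrow{\sim}H^1(X)$, and since $H^*(J)=\wedge^*H^1(J)$ is generated as a ring by $H^1(J)$, the image of the ring map $\phi^*\colon H^*(J)\to H^*(X)$ is exactly the subring $R$ generated by $H^1(X)$. Inspecting the proof of the proposition, the conjecture for $X$ is used only to make the summand $V(3^{2n-2s})^{G_c}\otimes\Q(n-s)=BH^{2n-2s,n-s}(U)$ algebraic. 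By \eqref{eq:decompBHtriv} a class here is a cup product of elements of $V_3^{G_c}$, where $V_3=W_1H=\mathrm{im}(H^1(X)\to H^1(U))$; lifting each factor to $H^1(X)$ and taking the corresponding cup product produces a Hodge class $\beta\in R\subseteq H^{2n-2s}(X)$ whose restriction to $U$ is the given class. Thus it is enough to prove that Hodge (resp.\ Tate) classes lying in $R=\mathrm{im}\,\phi^*$ become algebraic once the conjecture is assumed for $J$.

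I would obtain this by lifting along $\phi^*$. Because $H^*(J)$ is the cohomology of a smooth projective variety it is a polarizable, hence semisimple, Hodge structure, so the sub-Hodge-structure $\ker\phi^*$ has a complement and every Hodge class in $\mathrm{im}\,\phi^*$ equals $\phi^*\gamma$ for a Hodge class $\gamma$ on $J$. In the Tate case the identical argument applies once one knows that $H^*_{et}(\bar J,\Q_l)$ is a semisimple $G_k$-module: $H^1_{et}(\bar J,\Q_l)=\bigoplus_iH^1_{et}(\overline{J(\bar C_i)},\Q_l)$ is semisimple by Faltings \cite{faltings}\cite{fw}, and tensor and exterior powers of a semisimple $\Q_l[G_k]$-module are semisimple (the reductivity already used in Corollary \ref{cor:SMT}). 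The Hodge (resp.\ Tate) conjecture for $J$ now makes $\gamma$ the class of an algebraic cycle $Z$ on $J$; pulling back along the morphism $\phi$---that is, applying the graph correspondence $\Gamma_\phi\subset X\times J$---yields an algebraic cycle $\phi^!Z$ on $X$ with $[\phi^!Z]=\phi^*\gamma=\beta$. Restricting $\phi^!Z$ to $U$ makes the original class algebraic, and substituting this into the proof of the proposition gives both conclusions.

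The step I expect to cost the most is the one just sketched: verifying that the classes the proposition must produce really do lie in $\mathrm{im}\,\phi^*$ and lift to classes of the same type on $J$. The first half is the observation that these are cup products of $H^1$-classes, so lie in the subring generated by $H^1(X)$; the second half rests squarely on semisimplicity---polarizability in the Hodge case and Faltings' theorem in the Tate case---which is what guarantees that a Hodge (resp.\ Tate) class in a quotient of $H^*(J)$ lifts to one upstairs. Once these are in hand, the passage of algebraicity through $\phi^*$ and the restriction to $U$ are formal.
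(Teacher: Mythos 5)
Your proposal is correct and takes essentially the same route as the paper, whose one-line proof is exactly this: lift a Hodge (resp.\ Tate) class along the Abel--Jacobi pullback $H^*(J)\to H^*(X)$, apply the conjecture on $J$, and transport the resulting algebraic cycle back to $X$ (and hence to $U$). You simply make explicit what the paper leaves implicit --- the semisimplicity (polarizability, resp.\ Faltings' theorem) needed to lift Hodge/Tate classes along the pullback, the graph correspondence used to carry the algebraic cycle back, and the observation that only classes in the subring generated by $H^1(X)$ are needed, which incidentally repairs the paper's slightly too strong assertion that $H^*(J)\to H^*(X)$ is surjective (it fails when some $\bar{C}_i$ has genus $0$, though those classes are algebraic anyway).
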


\begin{proof}
  Let $X=\bar{C}_1\times\bar{C}_2\times\ldots\bar{C}_n$.
  The Abel-Jacobi map induces a surjection $H^*(J)\to H^*(X)$.
  A Hodge (resp. Tate) cycle $\gamma$ on $X$ can be pulled back to a 
  Hodge (resp. Tate) cycle on  $J$, and then the corresponding algebraic 
  cycle can be pushed back down to $X$ to prove algebraicity of $\gamma$.
\end{proof}

\begin{prop}\label{main2}
Let $U$ be a semiabelian variety and $X$ be a smooth compactification
of $U$ both defined over $k$.
  \begin{enumerate}
  \item If $k=\C$ and the Hodge conjecture is true
    for $X$ then $BH^{n,s}(U)$ is generated by algebraic cycles on
    $U$.
  \item If  $k$ is finitely generated and the Tate conjecture is true for $X$ then
    $BT^{n,s}(U)$ is generated by algebraic cycles on $U$.
  \end{enumerate}
\end{prop}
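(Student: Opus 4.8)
The plan is to follow exactly the template established in Proposition~\ref{strong}, replacing the product of curves by the semiabelian variety $U$ but keeping the same reduction through Theorem~\ref{BT:main} and the multiplicativity of the regulator. First I would set $H = H^1(U,\Q)$ (resp.\ $H^1_{et}(\bar U,\Q_l)$) and recall that for a semiabelian variety $H^*(U) = \wedge^* H$, so that by the K\"unneth-type decomposition together with Theorem~\ref{BH:main} (resp.\ Theorem~\ref{BT:main}) it suffices to show that every summand $V(j_1\ldots j_n)^{G_c}$ appearing in the description of $EH^{n,s}(H)$ (resp.\ $ET^{n,s}(H)$) is represented by an algebraic cycle. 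As before, by symmetry it is enough to treat the distinguished summand $V(3^{2n-2s}\,1^{2s-n})^{G_c}$.

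The heart of the argument is to factor this summand as a product of two pieces whose algebraicity we can establish separately, and then invoke the fact that the regulator preserves products. The pure piece $V(3^{2n-2s})^{G_c}$ lives in $W_{2n-2s}H^{2n-2s}(U)$, which is the image of $H^{2n-2s}(\bar X)$ under the natural map from the smooth compactification (using \cite{deligne-hodge} in the Hodge case and the analogous weight statement in the Tate case). The Hodge (resp.\ Tate) conjecture for $X$ gives surjectivity of $CH^{n-s}(X)\to BH^{2n-2s,n-s}(X)$ (resp.\ the Tate analogue), and the surjection $H^{2n-2s}(\bar X)\onto W_{2n-2s}H^{2n-2s}(U)$ then yields that $V(3^{2n-2s})^{G_c}$ is algebraic. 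The weight-$2$ piece $V(1^{2s-n})$ is handled purely by the level-$(1,1)$ results: by \cite{AK} (resp.\ Theorem~\ref{11case} combined with Lemma~\ref{lemma:key}) we have surjectivity of $CH^{2s-n}(U,2s-n)\onto BH^{2s-n,2s-n}(U)$ and its Tate counterpart. Multiplying the two surjections and using that $reg$ respects cup products produces a surjection $CH^s(U,2s-n)\onto V(3^{2n-2s}\,1^{2s-n})^{G_c}\otimes\Q(s)$, as desired.

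The one place where this proof genuinely differs from Proposition~\ref{strong}, and hence the main obstacle, is that for a general semiabelian variety the smooth compactification $X$ need not be a product, so I cannot simply invoke the K\"unneth formula to split $H^*(X)$ into tensor powers of $H^1$ times cycle classes. What I must instead verify is that the map $H^*(\bar X)\to H^*(\bar U)$ interacts correctly with the weight and tensor structure so that the pure summand $V(3^{2n-2s})^{G_c}$ really does arise as an honest Hodge (resp.\ Tate) class on $\bar X$ of the appropriate type; concretely, I need that $W_1 H = H^1(\bar X)$ pulls its wedge powers back to the correct subspace of $H^{2n-2s}(\bar X)$. This should follow from the standard description of the mixed Hodge structure on a semiabelian variety, where $\wedge^\bullet W_1 H$ is exactly the image of $H^\bullet(\bar X)$, but it is the step that requires care rather than a verbatim transcription of the curve case. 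Once that compatibility is in place, the remaining steps are identical to Proposition~\ref{strong}, and the Tate case (2) follows from the Hodge case (1) by substituting Theorem~\ref{BT:main} for Theorem~\ref{BH:main} and the Tate conjecture for the Hodge conjecture throughout.
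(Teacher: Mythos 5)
Your proposal is correct and takes essentially the same route as the paper: reduce via $H^*(U)=\wedge^* H\subset H^{\otimes *}$ and Theorem \ref{BH:main} (resp.\ Theorem \ref{BT:main}) to the distinguished summand $V(3^{2n-2s}\,1^{2s-n})^{G_c}$, treat the pure factor by the Hodge (resp.\ Tate) conjecture for $X$ together with the surjection $H^{2n-2s}(X)\onto W_{2n-2s}H^{2n-2s}(U)$, treat the weight-two factor by the known $(1,1)$ results, and multiply using that $reg$ preserves products. The ``obstacle'' you flag is exactly the step the paper leaves implicit in the phrase ``the rest of the proof is same as for product of curves,'' and your resolution is right: Deligne's identification of $W_mH^m(U)$ with the image of $H^m(X)$ holds for any smooth compactification (no K\"unneth on $X$ is needed), and multiplicativity of the weight filtration gives $W_{2n-2s}\wedge^{2n-2s}H=\wedge^{2n-2s}W_1H$.
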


\begin{proof}
Let $H=H^1(U,\Q)$.
Again note that $H^n(U,\Q)$ is a direct summand of $H^{\otimes n}$. So
$BH^{n,s}(U)$ is a direct summand of $BH^{n,s}(H)\otimes \Q(s)$.  
Theorem \ref{BH:main} yields that $BH^{n,s}(U)$ is a direct summand of 
$EH^{n,s}(H)\otimes \Q(s)$. So without loss of generality, it is enough 
to show that $BH^{n,s}(U)\cap [V(3^{2n-2s}\,1^{2s-n})^{G_c}\otimes Q(s)]$ is algebraic.
Again because of the splitting from wedge-products to tensor products, we know that 
$$BH^{n,s}(U)\cap [V(3^{2n-2s}\,1^{2s-n})^{G_c}\otimes 
\Q(s)]=BH^{2n-2s,n-s}(U)\otimes BH^{2s-n,2s-n}(U).$$
The rest of the proof is same as for product of curves.
\end{proof}

\begin{cor}\label{main3}
Let $A$ be an abelian variety, $T$ be a torus and $U$ be a semiabelian variety given 
by an extension of $A$ by $T$. The conclusion of the above proposition is true if
Hodge (resp. Tate) conjecture holds for $A$.
\end{cor}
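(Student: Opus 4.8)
The plan is to reduce the hypothesis from the smooth compactification $X$ of $U$ (required in Proposition \ref{main2}) down to the abelian variety $A$ alone. By Proposition \ref{main2}, it suffices to verify that the Hodge (resp. Tate) conjecture for $A$ implies the corresponding conjecture for some smooth compactification $X$ of the semiabelian variety $U$. So the whole task is to produce a convenient $X$ whose algebraic-cycle theory is controlled by that of $A$.

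First I would construct $X$ explicitly from the extension data. Since $U$ is an extension $1\to T\to U\to A\to 1$ of the abelian variety $A$ by the torus $T$, I would embed $T$ into a product of projective lines (after passing to a split form over $\bar k$, or working equivariantly over $k$), so that $U$ sits as a torsor/affine bundle whose fibers compactify to products of $\PP^1$'s. Concretely, $U\to A$ is a torsor under $T\cong\G_m^r$, classified by a class in $\Pic(A)^{\oplus r}$ (up to the split-torus reduction), and I would take $X$ to be the associated $(\PP^1)^r$-bundle over $A$. This $X$ is smooth and proper, contains $U$ as the complement of the $2r$ divisors at $0$ and $\infty$ in each $\PP^1$-factor, and is a genuine compactification.

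The key step is then a K\"unneth/projective-bundle analysis of $H^*(X)$. Since $X$ is a tower of $\PP^1$-bundles over $A$, the projective bundle formula gives
$$
H^*(\bar X,\Q_l)\cong\bigoplus_{\text{multidegrees}} H^*(\bar A,\Q_l)(\ast),
$$
so every cohomology class of $X$ is a sum of classes pulled back from $A$ multiplied by powers of the relative hyperplane classes. The relative hyperplane classes are themselves algebraic (they are Chern classes of tautological line bundles), and they are automatically Hodge/Tate classes. Hence a Hodge (resp. Tate) class on $X$ decomposes into pieces $\alpha\cup\xi^I$, where $\xi^I$ is a monomial in hyperplane classes and $\alpha$ is forced, by weight/Tate-twist bookkeeping, to be a Hodge (resp. Tate) class on $A$. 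The Hodge (resp. Tate) conjecture for $A$ makes each such $\alpha$ algebraic, and cupping with the algebraic classes $\xi^I$ keeps it algebraic, so every Hodge (resp. Tate) class on $X$ is algebraic. Feeding this into Proposition \ref{main2} gives the corollary.

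The main obstacle I anticipate is the torus-reduction and descent over a non-algebraically-closed $k$: in the Tate case one must ensure that the projective-bundle decomposition of $H^*(\bar X,\Q_l)$ is $G_k$-equivariant and that the hyperplane classes (hence the monomials $\xi^I$) are $G_k$-invariant and defined over $k$, which requires care when $T$ is a nonsplit torus. I would handle this by choosing the compactifying $(\PP^1)^r$-bundle so that the permutation/Galois action on the factors is built into the construction, so that the relevant $\xi^I$ span a $G_k$-stable space of algebraic classes; the weight-filtration compatibility from Lemma \ref{weight} then guarantees that the Tate pieces land on $H^*(A)$ as claimed. The Hodge case is the same argument with $G_k$ replaced by the Hodge-structure conditions and is routine once the compactification is fixed.
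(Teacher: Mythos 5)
Your proposal is correct and takes essentially the same route as the paper: compactify $U$ as a projective bundle over $A$ (the paper uses a single $\PP^r$-bundle rather than your tower of $\PP^1$-bundles), use the projective bundle formula to transfer the Hodge (resp.\ Tate) conjecture from $A$ to $X$, and feed this into Proposition \ref{main2}. The only divergence is the nonsplit-torus point: the paper simply replaces $k$ by a finite extension splitting $T$ (noting this is harmless), which is simpler than the Galois-equivariant compactification you sketch.
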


\begin{proof}
After replacing $k$ by a  finite extension (which is harmless), we can
assume that $T$ is split. Then $U$ can  be compactified by 
the projective space bundle $X$ over $A$.
Then  $H^*(X)=H^*(A)\otimes H^*(\PP^r)$ where
$r$ is the rank of torus $T$. So Hodge (resp. Tate) conjecture for $X$ is true if and only
if Hodge (resp. Tate) conjecture for $A$ is true.
\end{proof}

There are a number of well known criteria for the Hodge conjecture
for abelian varieties (cf. \cite{gordon}). Many of them are obtained by
analyzing the Mumford-Tate group of the first cohomology of abelian varieties.
For a smooth projective curve $C$ of nonzero genus, let $Lef(C)$ be the centralizer 
of $End(J(C))\otimes \Q$ in the symplectic group $Sp(H^1(C))$. The following is one 
of the basic criteria to decide Hodge conjecture for $J(C)^n$.

\begin{prop}[Murty]
  Let $C$ be a smooth projective curve such that $End(J(C))\otimes\Q$ is field and 
  $Lef(C)=SMT(C)$ then Hodge conjecture holds for $J(C)^n$ for all $n$.
\end{prop}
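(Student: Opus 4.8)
The plan is to translate the Hodge conjecture for all powers of $J=J(C)$ into a purely invariant-theoretic statement, and then to exploit the hypothesis $Lef(C)=SMT(C)$ to identify the Hodge classes with classes that are manifestly algebraic. Write $H=H^1(C,\Q)=H^1(J,\Q)$. By the K\"unneth formula, $H^*(J^n)=\wedge^*(H^{\oplus n})$ is a direct summand of the tensor algebra on $H^{\oplus n}$, so it is built out of Tate twists of the spaces $T^{a,b}(H)=H^{\otimes a}\otimes(H^*)^{\otimes b}$; under Poincar\'e duality $H^*\cong H(1)$, so in fact everything is assembled from $H$ and Tate twists. Hence the Hodge conjecture for every $J^n$ is equivalent to the assertion that every Hodge class in every $T^{a,b}(H)$ is algebraic. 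By the Tannakian description recalled earlier, the Hodge classes in $T^{a,b}(H)$ are exactly the invariants of $SMT(H)$ on that tensor space: the central $\G_m$ acts by the weight on each pure piece, so on a weight-zero piece invariance under the full Mumford--Tate group is the same as invariance under $SMT(C)$.

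Next I would bring in the divisorial, or Lefschetz, classes. The polarization of $J$ gives a distinguished class in $H^2(J)$, and by the Lefschetz $(1,1)$-theorem every Hodge class in $H^2$ is algebraic; consequently the whole subalgebra of $\bigoplus_n H^*(J^n)$ generated by divisor classes consists of algebraic cycles, and I call these the Lefschetz classes. The key input is that the Lefschetz classes in $T^{a,b}(H)$ are precisely the invariants of $Lef(C)$. Indeed $Lef(C)$ is by definition the centralizer of $\End(J)\otimes\Q$ in $Sp(H)$, and a standard computation identifies its invariants in $H^2(J)$ with the N\'eron--Severi classes; the First Fundamental Theorem of invariant theory for $Lef(C)$ then shows that all of its invariants in higher tensor degree are generated by these degree-two invariants. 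This is exactly where the hypothesis that $\End(J)\otimes\Q$ is a \emph{field} is used: it forces $Lef(C)$ to be a product of classical groups (symplectic groups in the totally real case, general-linear/unitary groups in the CM case), for each factor of which the First Fundamental Theorem applies cleanly.

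Finally I would combine the two identifications. One always has $SMT(C)\subseteq Lef(C)$: the Mumford--Tate group commutes with every endomorphism of $J$, since endomorphisms are Hodge classes and hence invariant, and after passing to $SMT$ it lands in $Sp(H)$ because of the polarization; therefore $SMT(C)$ lies in the centralizer of $\End(J)\otimes\Q$ inside $Sp(H)$, which is $Lef(C)$. This inclusion yields the expected reverse inclusion of invariant subspaces, so that Lefschetz classes are a subset of Hodge classes. Under the hypothesis $Lef(C)=SMT(C)$ the two groups have the same invariants in every $T^{a,b}(H)$, so the Lefschetz classes coincide with the Hodge classes there. Since Lefschetz classes are algebraic, every Hodge class on every $J^n$ is algebraic, which is the Hodge conjecture for $J(C)^n$.

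The main obstacle is the invariant-theoretic identification of the second paragraph, namely that the $Lef(C)$-invariants in all tensor degrees are generated by the degree-two (divisorial) invariants. Everything else is either definitional (Hodge classes as $SMT$-invariants), elementary (the inclusion $SMT(C)\subseteq Lef(C)$), or a direct appeal to Lefschetz $(1,1)$. The field hypothesis is precisely what guarantees that $Lef(C)$ is a product of the classical groups for which this generation statement holds; for a noncommutative endomorphism algebra the Lefschetz group can acquire factors whose tensor invariants are not generated in degree two, and the argument would break down.
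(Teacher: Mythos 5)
Your argument is correct in outline, but note that the paper does not actually prove this proposition: its entire proof is the citation \cite[Theorem 6.2]{gordon}, i.e., it defers to Murty's theorem as presented in Gordon's appendix. What you have written is essentially a reconstruction of the proof of that cited theorem, and your three steps match the standard argument: (i) Hodge classes on all powers of $J=J(C)$ are the $SMT(C)$-invariants in tensor spaces built from $H=H^1(C,\Q)$, with the duality $H^*\cong H(1)$ induced by the polarization; (ii) the $Lef(C)$-invariants are spanned by products of degree-two invariants, which by Lefschetz $(1,1)$ are divisor classes and hence algebraic --- this is the First Fundamental Theorem step, and your observation that the field hypothesis forces $Lef(C)$, after base change, to be a product of symplectic groups (totally real case) or general linear/unitary groups (CM case), for which the FFT gives generation in degree two, is exactly Murty's point; (iii) $SMT(C)\subseteq Lef(C)$ always holds, so the hypothesis of equality identifies Hodge classes with Lefschetz classes. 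Your closing remark is also accurate: for totally definite quaternion (type III) endomorphism algebras the Lefschetz group acquires orthogonal factors whose tensor invariants are not generated in degree two (the source of Weil's exceptional classes), so the field hypothesis is genuinely needed. The only point you gloss is that transporting algebraicity through the K\"unneth and duality identifications requires those identifications to be realized by algebraic correspondences; for abelian varieties this is harmless, since the relevant projections and symmetries are morphisms of varieties and the polarization isomorphism is given by a divisor class, but it deserves a sentence in a complete write-up. In short: your proof buys self-containedness where the paper buys brevity by citation, and the two rest on the same underlying invariant theory.
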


\begin{proof}
  This follows from \cite[Theorem 6.2]{gordon}.
\end{proof}
 
In fact for most smooth projective curves $C$ the Hodge conjecture holds for
$C^n$ as shown by the following proposition in \cite[Prop 6.5]{arapura}.

\begin{prop}
  There exists a countable union $S$ of proper Zariski closed sets in the moduli
  space $M_g(\C)$ of curves of genus $g\ge 2$, such that if $C\in M_g(\C)\setminus S$ 
  then the generalized Hodge conjecture holds for all powers of its Jacobian $J(C)$.
\end{prop}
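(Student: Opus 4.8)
The plan is to show that a very general Jacobian has the largest possible Mumford--Tate group, the full symplectic group, and then to deduce the generalized Hodge conjecture from the invariant theory of that group once maximality is known.

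First I would set up the relevant variation of Hodge structure. The universal curve over $M_g(\C)$ (passing to a level cover if convenient, which is harmless for the conclusion) carries a weight one polarized variation $\cV$ with fibre $H^1(C,\Q)$ over $[C]$, the polarization being cup product. It is classical (A'Campo, Deligne) that the monodromy representation $\pi_1(M_g)\to Sp(H^1(C))$ has Zariski dense image, so the connected monodromy group is all of $Sp(H^1(C))$. By the theorem of the fixed part in the form due to Andr\'e, the connected monodromy group is normal in the derived group of the generic Mumford--Tate group; since that derived group is contained in $Sp(H^1(C))$, which has no proper positive dimensional connected normal subgroup, the generic value of $SMT(J(C))$ is exactly $Sp(H^1(C))$, the largest subgroup of the symplectic similitude group.

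Next I would excise the exceptional locus. By the algebraicity of Hodge loci (Cattani--Deligne--Kaplan), the set of $[C]$ at which an extra Hodge class appears, equivalently at which $MT(J(C))$ drops below its generic value, is a countable union $S$ of proper closed algebraic subvarieties of $M_g(\C)$. For $C\notin S$ and every $n$, the Mumford--Tate group of a power is the generic group acting diagonally, so the Hodge structure $H^1(J(C)^n)=H^1(C)^{\oplus n}$ has $SMT$ equal to $Sp(H^1(C))$ acting diagonally.

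Finally, for such $C$ I would reduce the generalized Hodge conjecture to symplectic representation theory. Writing $V=H^1(C)$, the cohomology $H^*(J(C)^n)=\wedge^*(V^{\oplus n})$ is a sum of $Sp(V)$-representations and every sub-Hodge structure is an $Sp(V)$-subrepresentation. By the first fundamental theorem for the symplectic group every invariant is a polynomial in the symplectic form $\omega$, and $\omega$ is a divisor class (the theta divisor, or its translates and pullbacks along difference maps $\mu_{ij}$), hence algebraic; this already yields the ordinary Hodge conjecture. For the generalized version I would decompose each $\wedge^k(V^{\oplus n})$ into symplectic irreducibles via the Lefschetz decomposition and Schur--Weyl, read off the Hodge numbers of each primitive fundamental piece, and verify that its arithmetic coniveau $\tilde N^p$ is realized geometrically. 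The hard part will be exactly this matching of the geometric coniveau $N^p$ with $\tilde N^p$: producing honest subvarieties of the predicted codimension that cut out each symplectic irreducible, which is strictly more than the divisor-class generation sufficing for ordinary Hodge classes. I expect the representation theory to make the predicted levels explicit, so that positive coniveau arises only from Lefschetz powers $\omega^j$, reducing the remaining content to the hard Lefschetz theorem together with the algebraicity of the polarization class $\Theta$.
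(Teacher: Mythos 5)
Your statement is one the paper does not actually prove: it is quoted verbatim from \cite[Prop 6.5]{arapura} (Arapura, \emph{Motivation for Hodge cycles}), so the comparison must be with the argument there. Your outline correctly reconstructs its skeleton. Dense monodromy of the universal curve plus the theorem of the fixed part (in Andr\'e's normality form) forces the generic special Mumford--Tate group to be all of $Sp(H^1(C))$; the exceptional locus is a countable union of proper subvarieties, Zariski closed by Cattani--Deligne--Kaplan; the identity $MT(H^{\oplus n})=MT(H)$ handles all powers at once; and the first fundamental theorem for $Sp$, together with the algebraicity of the classes $\omega_{ij}$ (K\"unneth pieces of pullbacks of $\Theta$ under the addition maps $\mu_{ij}$ and projections, hence differences of ample divisor classes), yields the \emph{ordinary} Hodge conjecture for every $J(C)^n$. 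All of this is sound and is essentially the route of the cited proof.

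The genuine gap is exactly the step you defer with ``I expect the representation theory to make the predicted levels explicit'': that expectation \emph{is} the content of the word ``generalized,'' and without it your argument proves only the usual Hodge conjecture. Concretely, since every sub-Hodge structure of $H^k(J(C)^n)=\wedge^k(V^{\oplus n})$ is an $Sp(V)$-submodule, you must establish two facts. (a) The symplectic harmonic (Littlewood-type) decomposition of $\wedge^k(V^{\oplus n})$: the complement of the ideal generated by the $\omega_{ij}$ is precisely the sum of the irreducibles $V_\lambda$ with $|\lambda|=k$, so that every summand of weight-level deficit $2p$ is a $p$-fold $\omega_{ij}$-multiple; your closing sentence mentions only the Lefschetz powers $\omega^j$ of a single polarization, which is not enough --- all the mixed forms $\omega_{ij}$ are needed. (b) Each $\omega$-primitive copy of $V_\lambda$ with $|\lambda|=k$ has Hodge level exactly $k$, i.e.\ $h^{k,0}\neq 0$ on it; this is where the genus enters, because $Sp_{2g}$-irreducibles are indexed by partitions with at most $g$ parts and a highest weight vector of weight $\lambda$ can be located inside $S_\lambda(H^{1,0})\subseteq (H^{1,0})^{\otimes k}$, nonzero since $H^{1,0}$ is a $g$-dimensional Lagrangian and $\lambda$ has at most $g$ rows. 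Granting (a) and (b), the maximal sub-Hodge structure of level $\le k-2p$ is spanned by classes of the form $\omega_{i_1j_1}\cdots\omega_{i_pj_p}\cup\beta$, and writing each $\omega_{ij}$ as a difference of effective ample classes, iterated restriction--Gysin along general translates of theta-pullbacks places these in geometric coniveau $N^p$; the reverse inclusion $N^p\subseteq \widetilde{N}^p$ is Grothendieck's general remark. So: right architecture, matching the proof of the result the paper cites, but the decisive level computation (a)+(b) is asserted rather than proved, and as written the ``hard part'' you flag remains open in your text.
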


Any such $C$ is called a very general curve. We combine some of these results to 
obtain the following corollary.

\begin{cor}
  Let $C$ be an open subset of a smooth projective complex curve $\bar{C}$. 
 The strong version of the Beilinson-Hodge 
  conjecture holds for $C^n$ if $\bar{C}$ is
  one of the following
  \begin{enumerate}
  \item a curve of genus 1, 2 or 3.
  \item a curve of prime genus such that its Jacobian is simple.
  \item a Fermat curve $x^m+y^m+z^m=0$ with $m$ prime or less than 21.
  \item a curve admitting a surjection from a modular curve $X_1(N)$.
  \item a very general curve.
  \end{enumerate}
\end{cor}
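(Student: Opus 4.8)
The plan is to reduce the whole statement, via Corollary~\ref{jacobian}, to the ordinary Hodge conjecture for the powers of a single Jacobian. Since all the factors coincide here ($C_1=\cdots=C_n=C$), the product of Jacobians occurring in Corollary~\ref{jacobian} is $J(\bar C)^n$, so Proposition~\ref{strong} together with that corollary shows that the strong Beilinson--Hodge conjecture for $C^n$ follows once the Hodge conjecture is known for every power of $A:=J(\bar C)$. It therefore suffices, in each of the five cases, to verify that the Hodge conjecture holds for $A^n$ for all $n$; that is the only point to check.

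Three of the cases follow directly from the criteria recalled just above. Case (5) is the proposition on very general curves \cite{arapura}: for such $C$ the generalized Hodge conjecture holds for all powers of $J(C)$, which is more than enough. Case (2) is Murty's proposition: for $A$ simple of prime dimension, Albert's classification (with the genus $2$ case absorbed into (1)) forces $\End(A)\otimes\Q$ to be a field, while Tankeev's determination of the Hodge group of a simple abelian variety of prime dimension gives $Lef(C)=SMT(C)$, so the criterion applies. For case (1) I would use that an abelian variety of dimension at most three has $SMT$ equal to its Lefschetz group, so that every Hodge class on every $A^n$ is a Lefschetz (divisor) class and hence algebraic; this low-dimensional determination of the Hodge and Lefschetz groups, including the non-degeneracy of all CM types below dimension four, is recorded in \cite{gordon}.

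The remaining two cases are of complex-multiplication type, and I would dispatch them by citing the relevant special computations. In case (3) the Fermat curve $x^m+y^m+z^m=0$ has Jacobian with complex multiplication by a cyclotomic field, and the Hodge conjecture for its powers has been verified by Shioda and Aoki precisely in the stated range ``$m$ prime or $m<21$''. In case (4) a surjection $X_1(N)\to\bar C$ exhibits $A$ as an isogeny factor of $J_1(N)$, the Jacobian of $X_1(N)$, which decomposes up to isogeny into modular abelian varieties of $GL_2$-type; the Hodge conjecture for all powers of those factors is known and passes to the direct factor $A$.

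The main obstacle is not the reduction, which is formal, but the arithmetic of cases (3) and (4). Because these Jacobians have complex multiplication one generally has $SMT(C)\subsetneq Lef(C)$, so that high powers $A^n$ acquire exotic Hodge classes---of Weil type---that are \emph{not} generated by divisors; for such classes Murty's criterion says nothing, and the Hodge conjecture is open in general. Proving their algebraicity is exactly where the special structure of Fermat motives and of modular abelian varieties must be invoked, and it is the bound ``$m$ prime or $m<21$'' in (3) that marks the range in which this has actually been carried out. Cases (1), (2) and (5), by contrast, are ``divisor-generated'' and hence become essentially formal once the reduction through Corollary~\ref{jacobian} is in place.
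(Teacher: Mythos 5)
Your proposal is correct and follows essentially the same route as the paper: the paper likewise reduces via Corollary~\ref{jacobian} to the Hodge conjecture for $J(\bar{C})^n$ for all $n$, and then disposes of the five cases by citation (Tate/Murasaki and Mumford/Moonen--Zarhin for genus $\le 3$, Tankeev--Ribet for prime genus with simple Jacobian, Shioda for Fermat curves, Hazama--Murty for quotients of $X_1(N)$, and the very-general-curve proposition for the last case). Your reconstructions of cases (1) and (2) via the comparison of the special Mumford--Tate group with the Lefschetz group, and your remarks on the exceptional Weil-type classes in cases (3) and (4), are consistent with the content of those references, so the differences are only at the level of attribution.
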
 

\begin{proof}
  In view of corollary \ref{jacobian}, it is enough to show that Hodge conjecture
  holds for $J(C)^n$ in all the cases:
  \begin{enumerate}
  \item When $C$ is an elliptic curves, Hodge conjecture for $C^n$ was first proved 
    by Tate but never published his proof. In \cite{murasaki} Murasaki showed that 
    Hodge cycles on $C^n$ are generated from divisors. 
    For curves of genus 2 and 3 it was worked out by Mumford in an 
    unpublished work but a proof can be found in \cite{moonen-zarhin}.
  \item
    For these curves the result was shown by Tankeev and Ribet (\cite[p. 525]{ribet}).
  \item
    Shioda (\cite[Theorem IV]{shioda}) treated the Fermat curves.
  \item
    This is given by work of Hazama and Murty (see \cite{hazama}).
  \item
    This follows from the above proposition.
  \end{enumerate}
\end{proof}

\begin{cor}
  Let $U$ be a semiabelian variety obtained by an extension of the abelian variety
  $A$ by a torus. Strong versions of Beilinson-Hodge and Beilinson-Tate conjectures
  hold for $U$ if $A$ is defined over a number field and is one of the following type:
  \begin{enumerate}
    \item dim $A$ is 2 or an odd number and $\End^0(A\times_k \bar{k})=\Q$.
    \item dim $A=2d$ where $d$ is an odd number and $\End^0(A\times_k \bar{k})$ is a
      real quadratic field or an indefinite quaternion algebra over $\Q$.
    \item dim $A=4d$ where $d$ is an odd number and $\End^0(A\times_k \bar{k})$ is an
      indefinite quaternion algebra over a real quadratic field.
  \end{enumerate}
\end{cor}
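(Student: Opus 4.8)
The plan is to invoke Corollary \ref{main3}, which reduces the strong Beilinson-Hodge (resp. Beilinson-Tate) conjecture for $U$ to the ordinary Hodge (resp. Tate) conjecture for the abelian part $A$. So I would need only to verify, in each of the three listed cases, that the Hodge and Tate conjectures hold for $A$. The three conditions on $\dim A$ and $\End^0(A\times_k\bar k)$ are exactly the entries in Albert's classification of endomorphism algebras for which the algebraic monodromy group of $A$ is forced to be as large as possible, namely equal to the Lefschetz group $L(A)$, defined as the centralizer of $\End^0(A\times_k\bar k)$ in the symplectic group $Sp(H^1(A))$.

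First I would treat the Hodge side. The equality $\mathrm{Hg}(A)=L(A)$ of the Hodge group with the Lefschetz group, in precisely these three cases, is the content of theorems of Tankeev, Serre and Ribet; I would cite \cite{ribet} and the survey \cite{gordon}. Once this equality is in hand, the space of Hodge cycles on $A$ (indeed on every power $A^m$) equals the space of $\mathrm{Hg}(A)$-invariants, hence the space of $L(A)$-invariants, which is exactly the Lefschetz algebra generated by divisor classes. Since divisor classes are algebraic, the Hodge conjecture for $A$ follows, which by Corollary \ref{main3} handles part of the statement in each case.

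For the Tate side I would run the parallel argument with the $\ell$-adic algebraic monodromy group $G_\ell$, the Zariski closure of the image of $G_k$ in $GL(H^1_{et}(\bar A,\Q_l))$, in place of $\mathrm{Hg}(A)$. Here the number-field hypothesis is essential: Faltings' theorem \cite{faltings} gives semisimplicity of the Galois action together with the Tate conjecture for divisors and endomorphisms, and the work of Tankeev and Ribet (again see \cite{gordon}) establishes that $G_\ell$ coincides with the same Lefschetz group $L(A)$ in exactly these three Albert types. Then every Tate class on $A$ is an $L(A)$-invariant, hence a Lefschetz class, hence algebraic, giving the Tate conjecture for $A$ and completing the reduction.

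The genuinely hard input is entirely imported from this body of work: it is the identification of the monodromy group, Hodge-theoretic on one side and $\ell$-adic Galois on the other, with the full Lefschetz group $L(A)$. This is precisely what can fail outside the listed cases, and it is where the dimension restrictions ($\dim A$ odd or $2$; $\dim A=2d$ with $d$ odd; $\dim A=4d$ with $d$ odd) and the number-field hypothesis on the Tate side do their work. Granting these identifications, everything else in the proof is formal.
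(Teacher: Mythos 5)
Your proposal is correct and takes essentially the same route as the paper: the paper likewise invokes Corollary \ref{main3} to reduce to the ordinary Hodge and Tate conjectures for $A$, which it then imports from Serre's unpublished notes and W.~Chi \cite[Theorems 8.5, 8.6, 8.8]{chi} --- results established precisely by the monodromy-equals-Lefschetz-group identifications you unpack. The only quibble is attribution: on the $\ell$-adic side, in cases (2) and (3) the identification of $G_\ell$ with the Lefschetz group is due to Chi (extending Serre's methods), not to Ribet, whose paper concerns Hodge classes only.
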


\begin{proof}
 In view of corollary \ref{main3}, it is suffices to check that Hodge and Tate 
conjectures are true for $A$ in the three cases. The first case was proved by 
Serre in an unpublished notes. Serre's methods were extended by W. Chi to show
Mumford-Tate conjecture, Hodge conjecture and Tate conjecture in all the three
cases (\cite[Theorem 8.5, 8.6, 8.8]{chi}).
\end{proof}

Tankeev (\cite{tankeev}) has extended these results of Serre and Chi to some other 
classes of abelian varieties. So corollary \ref{main3} can be applied in these 
cases as well.
% \begin{cor}
%   Let $U$ be a smooth variety over a number field, dominated by the product 
%   of open subsets of a Fermat curve then the strong version Beilinson-Tate 
%   conjecture holds for $U$.
% \end{cor}

% \begin{proof}
%   Shioda has shown (\cite{shioda-ht}) that the Tate conjecture 
% \end{proof}

\section{Case of smooth varieties dominated by products of curves}

In this section we will deduce Beilinson-Hodge and Beilinson-Tate
conjectures for smooth varieties $U$ which are dominated by product of
curves by a proper surjective morphism.

% For a smooth complex variety $U$, let $BH^q(U)=Hom_{MHS}(\Q(-q),
% H^q(U,\Q))$.

\begin{lemma}
  Suppose that the Beilinson-Hodge (respectively Beilinson-Tate)
  conjecture holds for a smooth variety $Y$ and there is a proper
  surjective map $p:Y \to X$ with $X$ smooth. Then the Beilinson-Hodge 
  (respectively Beilinson-Tate) conjecture holds for $X$.
\end{lemma}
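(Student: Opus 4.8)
The plan is to realize $X$ as a ``motivic retract'' of $Y$ by means of a multisection of $p$, and then transport surjectivity of the regulator across this retraction. Set $d=\dim Y-\dim X$. First I would produce a subvariety $Z\subseteq Y$ with $\dim Z=\dim X$ on which $p$ restricts to a generically finite surjection $p|_Z:Z\onto X$ of some degree $e>0$; such a $Z$ exists because the generic fibre $Y\times_X\spec k(X)$ is a positive-dimensional variety over $k(X)$, hence has a closed point, whose closure in $Y$ is the desired $Z$. Since $Y$ is smooth, the cycle $[Z]$ defines a class in $CH^d(Y)=CH^d(Y,0)$, and $reg_Y([Z])$ is its cycle class in $H^{2d}(Y,\Q(d))$ (resp. $H^{2d}_{et}(\bar Y,\Q_l(d))$), no smoothness of $Z$ being required.

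Next I would define the transfer
$$\Psi=p_*(\,-\,\cup[Z]):CH^q(Y,q)\to CH^q(X,q),\qquad \gamma\mapsto p_*(\gamma\cdot[Z]),$$
using the intersection product $CH^q(Y,q)\otimes CH^d(Y,0)\to CH^{q+d}(Y,q)$ on the smooth variety $Y$ followed by the proper pushforward $p_*:CH^{q+d}(Y,q)\to CH^q(X,q)$. Because $reg$ is compatible with both products and proper pushforward (established in Section~\ref{cycle-map}), there is a matching operation $\alpha\mapsto p_*(\alpha\cup[Z])$ on cohomology, and the regulator intertwines the two. The key computation is the projection formula: for $\beta\in BH^q(X)$ (resp. $BT^q(X)$) one has $p_*(p^*\beta\cup[Z])=\beta\cup p_*[Z]=e\,\beta$, since $p_*[Z]=e\cdot 1\in H^0(X,\Q(0))$.

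With these in hand the argument is short, and I note that it uses only the cohomological pullback $p^*$, never a pullback on higher Chow groups. Given $\beta\in BH^q(X)$, its pullback $p^*\beta$ lies in $BH^q(Y)$ because $p^*$ is a morphism of mixed Hodge structures (resp. is $G_k$-equivariant). By the Beilinson-Hodge (resp. Beilinson-Tate) conjecture for $Y$, choose $\gamma\in CH^q(Y,q)\otimes\Q$ (resp. $\otimes\Q_l$) with $reg_Y(\gamma)=p^*\beta$. Then
$$reg_X\bigl(\tfrac1e\,p_*(\gamma\cdot[Z])\bigr)=\tfrac1e\,p_*\bigl(reg_Y(\gamma)\cup[Z]\bigr)=\tfrac1e\,p_*(p^*\beta\cup[Z])=\beta,$$
so $reg_X$ is surjective and the conjecture holds for $X$. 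The Hodge and Tate cases proceed in parallel, with the Galois action of $G_k$ playing the role of the mixed Hodge structure.

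The step I expect to demand the most care is the projection-formula identity together with the precise Tate twists in the Gysin map $p_*:H^{q+2d}(Y,\Q(q+d))\to H^q(X,\Q(q))$, and the verification that the cohomological operation $\alpha\mapsto p_*(\alpha\cup[Z])$ is genuinely the $reg$-image of the higher-Chow transfer $\Psi$; both reduce to the compatibility of $reg$ with products and pushforward proved earlier, so the only essential input beyond bookkeeping is the existence of the degree-$e$ multisection $Z$, which guarantees $e\neq 0$ and hence that division by $e$ is legitimate rationally.
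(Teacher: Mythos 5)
Your proof is correct, but it implements the shared key idea at a different level than the paper does. The paper also starts from the multisection $Z\subseteq Y$ with $g=p|_Z\colon Z\to X$ generically finite surjective of degree $e$, but then it stays entirely in cohomology: since $g_*g^*=e\cdot\mathrm{id}$, the pushforward $g_*\colon H^*(Z)\to H^*(X)$ is split surjective, with splitting $\frac{1}{e}g^*$ a morphism of mixed Hodge structures (resp.\ of Galois modules), so surjectivity descends to the $BH$ (resp.\ $BT$) groups; writing $g_*=p_*\circ f_*$ shows the right-hand vertical map $p_*$ in the single commutative square $reg_X\circ p_*=p_*\circ reg_Y$ is surjective, and surjectivity of $reg_Y$ then forces surjectivity of $reg_X$, with no products and no lifting of any particular class. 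You instead lift the specific class $p^*\beta$ through $reg_Y$ and bring it home via the motivic transfer $\gamma\mapsto\frac{1}{e}\,p_*(\gamma\cdot[Z])$ on higher Chow groups. This costs you the intersection product $CH^q(Y,q)\otimes CH^d(Y)\to CH^{q+d}(Y,q)$ (legitimate rationally for smooth quasi-projective $Y$, and section~\ref{cycle-map} does establish compatibility of $reg$ with both products and proper pushforward, so all your squares commute), but it buys a genuine simplification that the paper's argument quietly glosses over: you never need a Gysin pushforward $f_*\colon H^*(Z)\to H^*(Y)$ from the possibly singular $Z$, which in the paper's route must be construed via Borel--Moore homology and duality in the ambient smooth $Y$; you only use the cycle class of $[Z]$ in smooth $Y$ together with $p^*$ and $p_*$ between smooth varieties. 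Your construction of $Z$ (closure of a closed point of the generic fibre, proper so the image is closed, hence surjective and generically finite of degree $e=[k(Z):k(X)]$, automatically defined over $k$) is the same as what the paper implicitly assumes, and your bookkeeping of twists and of the projection formula $p_*(p^*\beta\cup[Z])=e\beta$ is accurate; both arguments pivot on the same identity with $e\neq 0$, and both are complete.
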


\begin{proof}
  We treat the case of Beilinson-Hodge only, since the Beilinson-Tate 
  case is identical. Let $r=\dim(Y)-\dim(X)$ be the relative dimension.
  % Consider the diagram
  The properties of the regulator map in section \ref{cycle-map} shows
  the commutativity of the following diagram.
  $$
  \xymatrix{
    CH^i(Y, j)\otimes \Q\ar[r]^{reg_Y}\ar[d]^{p_*} & BH^{i,2i-j}(Y)\ar[d]^{p_*} \\
    CH^{i-r}(X, j)\otimes \Q\ar[r]^{reg_X} & BH^{i-r,2i-j-2r}(X) }
  $$
  By assumption, $reg_Y$ is surjective. Let $f: Z\into Y$ be a subvariety 
  so that the restriction $g: Z\to X$ is generically finite and surjective.
  Then $g_*: H^*(Z)\to H^*(X)$ is surjective by the projection formula.
  Since $g=p\circ f$, $p_*$ on the right in the above commutative 
  diagram is surjective. Therefore $reg_X$ must be surjective.
\end{proof}

\begin{cor} \label{maincor} 
  Let $U$ be a product of smooth curves over $k$, $V$ be a smooth variety 
  over $k$ and $p:U\to V$ be a proper surjective morphism. Let $X$ be a 
  smooth compactification of $U$.
  \begin{enumerate}
    \item If $k=\C$ then the Beilinson-Hodge conjecture holds for $V$ and if 
      Hodge conjecture holds for $X$ then strong version of Beilinson Hodge
      conjecture also holds for $V$.
    \item If $k$ is a number field then the Beilinson-Tate conjecture holds 
      for $V$ and if Tate conjecture holds for $X$ then the strong version of
      Beilinson Tate conjecture also holds for $V$.
  \end{enumerate}
\end{cor}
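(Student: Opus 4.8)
The plan is to deduce each of the four assertions for $V$ from the corresponding statement for the product of curves $U$ by pushing forward along $p$, treating the two basic conjectures together and the two strong conjectures together, since within each pair the Hodge and Tate arguments run in parallel. Throughout set $r=\dim U-\dim V\ge 0$.

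For the two basic assertions I would simply quote the preceding lemma. Since $U$ is a product of smooth curves, the Beilinson-Hodge conjecture holds for $U$ when $k=\C$ by \cite{AK}, and the Beilinson-Tate conjecture holds for $U$ when $k$ is a number field by the corollary on products of curves proved above. As $p\colon U\to V$ is proper and surjective with $V$ smooth, the preceding lemma applies directly and yields the Beilinson-Hodge, respectively Beilinson-Tate, conjecture for $V$.

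For the two strong assertions the first step is to strengthen the input on $U$. We may take the smooth compactification to be $X=\bar C_1\times\cdots\times\bar C_n$; since the Hodge (resp.\ Tate) conjecture is a birational invariant among smooth projective varieties, assuming it for an arbitrary smooth compactification of $U$ is the same as assuming it for this $X$. Proposition \ref{strong}, whose two parts cover the Hodge and the Tate case, then shows that $BH^{n,s}(U)$ (resp.\ $BT^{n,s}(U)$) is generated by algebraic cycles in every bidegree, i.e.\ the strong Beilinson-Hodge (resp.\ Beilinson-Tate) conjecture holds for $U$. What remains is to transport this strong surjectivity from $U$ down to $V$.

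This descent is the strong analogue of the preceding lemma and is the step I expect to be the main obstacle. The commutative square relating $reg_U$ and $reg_V$ through $p_*$ exists in every bidegree, so everything reduces to showing that $p_*$ is surjective on Beilinson cycles at each bidegree $(i,j)$. For this I would fix a subvariety $f\colon Z\hookrightarrow U$ whose restriction $g=p\circ f\colon Z\to V$ is generically finite and surjective. Given $\beta\in \Hom_{MHS}(\Q(0),H^{2i-j}(V,\Q(i)))$ in the Hodge case (resp.\ a $G_k$-invariant class in the Tate case), the element $\tfrac{1}{\deg g}\,f_*g^*\beta\in H^{2i-j+2r}(U,\Q(i+r))$ is again a Beilinson cycle, because $g^*$ and the Gysin maps are morphisms of mixed Hodge structures up to twist (resp.\ are $G_k$-equivariant) and hence preserve Hodge (resp.\ Galois-invariant) classes, and it satisfies $p_*\bigl(\tfrac{1}{\deg g}f_*g^*\beta\bigr)=\beta$ by the projection formula $g_*g^*=\deg(g)$. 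The delicate point, and the reason the full strength of the conjecture for $U$ is required, is that this lifted class sits in the \emph{off-diagonal} group $H^{2i-j+2r}(U,\Q(i+r))$: by strong surjectivity of $reg_U$ it equals $reg_U(\gamma)$ for some $\gamma\in CH^{i+r}(U,j)$ (rationally, or after $\otimes\,\Q_l$ in the Tate case), and then $reg_V(p_*\gamma)=p_*reg_U(\gamma)=\beta$ with $p_*\gamma\in CH^{i}(V,j)$, giving the strong conjecture for $V$. The identical chain of pullback, Gysin and pushforward, with $G_k$-invariance replacing the Hodge condition, settles the Tate case.
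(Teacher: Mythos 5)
Your overall architecture coincides with what the paper intends (it states Corollary \ref{maincor} without any proof, immediately after the descent lemma): the basic Beilinson-Hodge and Beilinson-Tate statements for $V$ follow from that lemma applied to $p:U\to V$, using \cite{AK} and the corollary on products of curves for the hypothesis on $U$; and for the strong versions, your descent step is exactly the lemma's own argument, whose commutative diagram is already written in an arbitrary bidegree $(i,j)$. Your explicit lift $\tfrac{1}{\deg g}f_*g^*\beta$ together with $g_*g^*=\deg(g)$ is precisely what the paper's terse phrase that ``$p_*$ on the right is surjective by the projection formula'' amounts to, and the chain $reg_V(p_*\gamma)=p_*reg_U(\gamma)=\beta$, using compatibility of $reg$ with proper pushforward from section \ref{cycle-map}, is the intended conclusion. (As in the paper, one should strictly speaking replace $Z$ by a desingularization so that $g^*$ and the Gysin map are defined on the level of mixed Hodge structures, resp.\ Galois modules; this is the same level of detail as the original.)

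There is, however, one genuinely unjustified step: the claim that the Hodge (resp.\ Tate) conjecture is a birational invariant of smooth projective varieties. This is not a known theorem. Via weak factorization a birational map factors through blowups along smooth centers, and while the conjecture for a blowup $\mathrm{Bl}_C X$ implies it for $X$ (pull the class back, use algebraicity upstairs, push down), the converse requires the conjecture for the centers $C$, which your hypothesis does not grant. Concretely, in your situation there need not even be a morphism from the given compactification $X$ to $\bar C_1\times\cdots\times\bar C_n$: for instance $\PP^2$ is a smooth compactification of $\A^1\times\A^1$, and $\PP^2\dashrightarrow\PP^1\times\PP^1$ is not a morphism; resolving the indeterminacy produces a blowup of $X$, to which the Hodge conjecture for $X$ does not transfer. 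Fortunately the detour is unnecessary: the proof of Proposition \ref{strong} uses about its compactification only (a) the Hodge (resp.\ Tate) conjecture for it and (b) Deligne's surjection $H^{2n-2s}(X)\onto W_{2n-2s}H^{2n-2s}(U)$, and (b) holds for \emph{every} smooth compactification of $U$ (likewise in the \'etale setting via the weight filtration). So you should simply run the argument of Proposition \ref{strong} verbatim with the given $X$ in place of $\bar C_1\times\cdots\times\bar C_n$; with that substitution your proof is complete and agrees with the paper's intended one.
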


% \begin{proof}
%   Let $H=H^1(U,\Q)$. By Abel's theorem, \cite[Theorem 1.1]{AK}, the
%   regulator map $reg:\cO(U)^*\to
%   Hom_{MHS}(\Q(-1),H^1(U,\Z))=BH^1(U)=BH^1(H)$ is surjective.  By
%   \cite[Theorem 3.1]{AK}, $BH^1(H)\times\ldots\times BH^1(H) \to
%   BH^q(H)$ is a surjection.  By the lemma \ref{directsummand},
%   $BH^q(U)$ is contained in $BH^q(H)$.  So the first part of the
%   corollary follows from \cite[Lemma 1.1]{AK}.

%   Similarly for the second part, Jannsen's theorem yields
%   surjectivity of $reg:CH^1(U,1)\otimes Q_l \to BT^1(U)$.  The rest
%   follows from theorem \ref{mainthmAK} and lemma
%   \ref{directsummand}.
% \end{proof}

% \subsection*{Complement of translates of theta divisor in the
%   Jacobian of a curve}
\begin{prop}
Every semiabelian variety $U$ is dominated by product of curves.
\end{prop}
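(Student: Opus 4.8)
The plan is to peel off the torus and the abelian variety in turn and reduce everything to a statement about $\G_m$-torsors over a product of projective curves. Write the semiabelian variety as an extension $0\to T\to U\to A\to 0$ with $A$ abelian and $T$ a torus. After a finite extension of the base field (harmless for the applications, exactly as in the proof of Corollary~\ref{main3}) I would assume $T\cong \G_m^t$ is split; since $\G_m$ is itself a curve, the torus directions are already accounted for by curve factors. For the abelian part I would use the classical fact that there is a smooth projective curve $C$ and a surjective homomorphism $J(C)\onto A$ (take a general curve section of $A$), together with the proper surjection $C^{\,g}\to \mathrm{Sym}^{g}C\to J(C)$. Composing gives a product of projective curves $B$ with a proper surjection $\phi\colon B\to A$. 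Pulling the extension back along $\phi$ yields a $T$-torsor $P=\phi^{*}U$ over $B$ together with a proper surjection $P\to U$ (base change of $\phi$), so it suffices to dominate $P$ by a product of curves.

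The key observation is that this torsor is generically trivial. Indeed $B$ is a product of curves, so by the seesaw theorem $\Pic^{0}(B)=\prod_i\Pic^{0}(C_i)$; hence the classes in $\Pic^{0}(A)$ defining $U$ pull back to sums of classes coming from the individual factors, and at the generic point each dies by Hilbert~90. Choosing, on each factor $C_i$, a divisor supported on a finite set $S_i$ that represents the relevant class, the torsor $P$ trivializes over the open product $B^{\circ}=\prod_i(C_i\setminus S_i)$: one gets an isomorphism $B^{\circ}\times T\xrightarrow{\ \sim\ }P|_{B^{\circ}}$, which is literally a product of (open) curves. Composing with $P\to U$ already dominates everything except the fibres of $U\to A$ lying over the bad locus.

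The main obstacle, and the heart of the argument, is to upgrade this birational/chart description to an honest proper surjection onto all of $U$. The difficulty is genuine: the torus direction obstructs properness (the multiplication $\G_m^N\to\G_m$ is not proper, and when the extension class is non-torsion $U$ contains no complete curves, so a na\"ive multiplication-map argument cannot work), and the chart above is only a dense open of $P$. The plan to overcome this is to compactify $P$ to the associated $(\PP^1)^t$-bundle $\bar P$ over $B$, on which the fibrewise $T$-action has the boundary $\bar P\setminus P$ as its fixed locus, and then to dominate $\bar P$ by a product of projective curves by a proper surjective morphism, arranged so that the preimage of the boundary is a union of coordinate divisors; taking the preimage of $U$ is then proper over $U$ by base change and is again a product of curves. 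The construction of this last domination should use precisely the generic triviality established above, namely that $P$ is birational over $B$ to $B\times T$, so that after a suitable finite base change (which keeps $B$ a product of curves) and resolution of the resulting indeterminacy one can realize $\bar P$, and hence $U$, as dominated by a product of curves. Verifying that the boundary pulls back to coordinate divisors, so that the product-of-curves structure survives on the open part, is the step I expect to require the most care.
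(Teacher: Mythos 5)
Your reduction steps are fine as far as they go---pulling the extension back along a product-of-curves domination $\phi\colon B\to A$ of the abelian part, and observing that the resulting $T$-torsor $P$ is generically trivial---but the final step, which you yourself identify as the heart of the argument, has a genuine gap, and the two mechanisms you propose for closing it provably cannot work. The extension is classified by finitely many classes in $\Pic^0(A)$, which pull back to degree-zero classes $L_i$ on the curve factors of $B$. Your plan calls for ``a suitable finite base change (which keeps $B$ a product of curves)'' after which the torsor trivializes; but if $f\colon D\to C$ is a finite map of smooth projective curves of degree $d$ and $f^*L\cong\OO_D$, then the norm gives $\mathrm{Nm}_{D/C}(f^*L)\cong L^{\otimes d}\cong\OO_C$, so only \emph{torsion} classes can be killed by finite base change. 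For a non-torsion extension class---exactly the interesting case, as you note, since $U$ then contains no complete curves---the $(\PP^1)^t$-bundle $\bar P$ never becomes a product after any finite base change, and your trivialization remains irreducibly birational. The fallback, ``resolution of the resulting indeterminacy,'' then destroys precisely the structure the whole strategy depends on: a blow-up of a product of curves is not a product of curves, and the domination lemma of Section 4 requires the dominating variety to literally be one. You give no construction of a proper surjection from a product of projective curves onto $\bar P$ with boundary pulling back to coordinate divisors, and it is not clear any exists; so the proposal stalls at its crux.

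The paper's proof avoids the torsor-splitting problem entirely by mapping \emph{into} $U$ rather than pulling $U$ back: choose (possibly affine) curves $C_1,\ldots,C_n\subset U$ that generate $U$ as a group, and let $X_i$ be the one-point compactification of $C_i$. Serre's generalized Jacobian $J_i=J(X_i)$ comes with a map $\phi_i\colon C_i\to J_i$ universal for morphisms to commutative group schemes, so each inclusion $C_i\hookrightarrow U$ extends to a homomorphism $J_i\to U$, and the sum map $J_1\times\cdots\times J_n\to U$ is surjective because the $C_i$ generate. Since $C_i^{\pi_i}\onto J_i$ (with $\pi_i$ the arithmetic genus of $X_i$), the composite exhibits $U$ as dominated by a product of curves. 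The idea you were missing is that the generalized Jacobian is itself semiabelian, so its universal property handles the torus directions simultaneously with the abelian ones: there is no need to split the torus, trivialize the torsor, or compactify anything, because the curves are taken inside $U$ from the start and the group structure of $U$ does the gluing.
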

\begin{proof}
Let $C_1,C_2,\ldots C_n$ be (possibly affine) curves in $U$ such that they 
generate $U$ and $X_i$ be a one-point-compactification of $C_i$. 
By \cite[Chapter V]{serre-jacobian} there exist commutative group schemes called generalized
Jacobians $J_i=J(X_i)$ and morphisms $\phi_i:C_i\to J_i$ which are universal 
for commutative group schemes. This defines a surjective morphism 
$\phi:J_1\times J_2\times\ldots J_n\to U$ given by 
$\phi(x_1,\ldots x_n)=\phi_1(x_1)\cdot\phi_1(x_2)\cdot\ldots \phi(x_n)$.
There is also a surjective morphism from 
$C_i^{\pi_i}\to J_i$ where $\pi_i$ is the arithmetic genus of $X_i$. Hence
$U$ is dominated by product of curves. 
\end{proof}

We can recover corollary \ref{cor:BT} from the above proposition.  We give another
example where corollary \ref{maincor} applies.  Let $C$ be a smooth projective curve of
genus $g$ and $P$ be a $k$-rational point on it. Let $J=J(C)$ be the
Jacobian of $C$. Viewing points on $J$ as degree zero divisors on $C$,
let $f_n:C^n\to J$ be the Abel-Jacobi morphism given by
$(P_1,\ldots, P_n)\mapsto P_1+\ldots +P_n - nP$. We can identify $C$
with its image $f_1(C)$.  By Jacobi inversion, $f_g$ is a generically
finite surjective morphism. The image of $f_{g-1}$ is the
$\Theta$-divisor on $J$ (\cite{abvar}).

\begin{cor}
  Given rational points $Q_1,\ldots Q_m\in C(k)$, the Beilinson-Hodge
  and Beilinson-Tate conjectures holds for $J\setminus \cup_i
  (\Theta+Q_i)$. Moreover if Hodge (resp. Tate) conjecture holds for $J$
  then the strong Beilinson-Hodge (resp. Beilinson-Tate) conjecture holds
  for $J\setminus \cup_i  (\Theta+Q_i)$.
\end{cor}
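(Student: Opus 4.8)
The plan is to exhibit the open variety $V = J\setminus \cup_i(\Theta+Q_i)$ as the image of a product of curves under a proper surjective morphism, and then invoke Corollary \ref{maincor} directly. First I would set up the geometry: the Abel-Jacobi map $f_g:C^g\to J$ is generically finite and surjective by Jacobi inversion, as recalled in the excerpt. The key point is that the divisors being removed, namely the translates $\Theta+Q_i$ of the $\Theta$-divisor (which is the image of $f_{g-1}$), pull back under $f_g$ to a divisor on $C^g$ whose complement is itself a product of open curves, or at least is dominated by one. So I would define $U' = f_g^{-1}(V)\subseteq C^g$ and show that the restriction $p:U'\to V$ is still proper and surjective, with $U'$ a product of (affine) curves.

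The main technical step is identifying the preimage concretely. A point $(P_1,\ldots,P_g)\in C^g$ maps into $\Theta+Q_i$ precisely when $P_1+\ldots+P_g-gP-Q_i$ lies on $\Theta$, i.e.\ when the corresponding divisor class is effective of the appropriate degree. I would argue that this locus is a proper closed subset, and more usefully that removing finitely many translates of $\Theta$ corresponds, after possibly shrinking, to removing a divisor of the form $\cup_j(\{P_j\in S_j\})$ for finite sets $S_j\subset C$, so that the complement is a product $\prod_j(C\setminus S_j)$ of smooth affine curves. The cleanest route may be to instead remove suitable points from the factors of $C^g$ so that the resulting product of open curves maps into $V$ and still surjects onto it; since $f_g$ is dominant and proper, one only needs the image to be all of $V$ and the map to remain proper after restricting to an open whose complement in $C^g$ is a union of ``coordinate'' divisors.

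Once $U'$ is realized as a product of smooth curves with a proper surjective morphism $p:U'\to V$, the conclusion is immediate from Corollary \ref{maincor}: part (1) gives the Beilinson-Hodge conjecture for $V$ over $\C$, and part (2) gives Beilinson-Tate over a number field, with no further hypotheses. For the strong versions, Corollary \ref{maincor} requires the Hodge (resp.\ Tate) conjecture for a smooth compactification $X$ of $U'$. Here I would take $X = C^g$ itself, which is already smooth and projective, so its Hodge (resp.\ Tate) conjecture is exactly the Hodge (resp.\ Tate) conjecture for $C^g$; by Corollary \ref{jacobian} this is implied by the corresponding conjecture for the product of Jacobians, which since all factors equal $J=J(C)$ reduces to the conjecture for $J$. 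This matches the stated hypothesis precisely.

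The step I expect to be the main obstacle is verifying that $p:U'\to V$ remains \emph{proper} after passing to the open complement, and simultaneously that $U'$ is genuinely a product of curves (not merely dominated by one). Properness of $f_g$ on all of $C^g$ is automatic since $C^g$ is projective, but restricting to the open set $U'$ can destroy properness unless the complement $f_g^{-1}(\cup_i(\Theta+Q_i))$ is exactly the full preimage — which it is, since $U' = f_g^{-1}(V)$ is by definition the preimage of an open set and base change of a proper map along the open immersion $V\hookrightarrow J$ keeps the restriction proper. The remaining delicacy is the product structure: I would need to check that the preimage of the removed translates really cuts out a product of open curves, and if it does not literally do so, to fall back on the weaker requirement of Corollary \ref{maincor}, which only asks that $U'$ be a product of curves dominating $V$, so that one is free to enlarge or adjust $U'$ to a genuine product as long as the dominating map stays proper and surjective.
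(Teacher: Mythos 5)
Your route is exactly the paper's: the paper's entire proof is the single sentence asserting that the restriction of $f_g$ gives a proper surjective morphism $(C\setminus\{Q_1,\ldots,Q_m\})^g\to J\setminus\bigcup_i(\Theta+Q_i)$, after which Corollary \ref{maincor} applies, with the strong version reduced via the compactification $C^g$ and Corollary \ref{jacobian} to the Hodge/Tate conjecture for the product of Jacobians (note this product is $J^g$, not $J$ itself, a slippage you inherit when you say the hypothesis ``matches precisely''). So the obstacle you flag at the end is not a side issue: it is the entire content of the paper's proof, and neither your fallbacks nor, read literally, the paper's sentence actually closes it.

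Concretely, the inclusion you do establish is the correct one and gives properness: if some $P_j=Q_i$ then $h^0(P_1+\cdots+P_g-Q_i)\ge 1$, so the point maps into $\Theta+Q_i$; hence $f_g^{-1}(V)\subseteq (C\setminus\{Q_1,\ldots,Q_m\})^g$ and $f_g^{-1}(V)\to V$ is proper (indeed finite, since every class in $V$ has $h^0=1$) and surjective, by base change along $V\hookrightarrow J$. But the reverse inclusion fails for $g\ge 2$: if $D=P_1+\cdots+P_g$ is a special divisor, i.e.\ $h^0(D)\ge 2$, then $h^0(D-x)\ge 1$ for \emph{every} point $x$ of $C$, so $f_g(P_1,\ldots,P_g)$ lies on every translate $\Theta+Q_i$ even when no coordinate equals any $Q_i$; for $g=2$ all hyperelliptic-conjugate pairs $(P,\iota P)$ map to the single point $[K_C-2P]\in\bigcap_i(\Theta+Q_i)$. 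Thus $f_g^{-1}(V)$ is the punctured product minus the special locus $\Sigma$ (nonempty for all $g\ge2$) and is \emph{not} a product of curves, while the genuine product does not map into $V$ at all --- so your proposal to ``enlarge or adjust $U'$ to a genuine product'' with the map staying proper and surjective onto $V$ cannot be carried out, and ``after possibly shrinking'' changes the target variety. What is needed and missing, in your write-up as in the paper's, is the Beilinson--Hodge/Tate conjecture for the open set $(C\setminus\{Q_1,\ldots,Q_m\})^g\setminus\Sigma$ (say, a proper surjection onto it from a product of curves); your instinct that this was the main obstacle is sound, but the proposal leaves the gap open rather than resolving it.
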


\begin{proof}
  The map $(C\setminus \{Q_1,\ldots Q_m\})^g\to J\setminus \cup_i
  (\Theta+Q_i)$ given by the restriction of $f_g$ is a proper
  surjective morphism.
\end{proof}

\end{document}